\newcommand\define{\mathrel{:= }}
\newcommand\ede{\define}
\newcommand\seq{=}
\newcommand\esssup{\mathop{\mathrm{ess}\kern0.08em\textrm{-}\kern0.08em\mathrm{sup}}}
\definecolor{darkgreen}{HTML}{009900}
\definecolor{darkblue}{HTML}{002299}
\newcommand\II{\mathrm{I}\hskip-.3mm\mathrm{I}}
\renewcommand\ne[1]{|#1|}
\newcommand\datver[1]{\def\datverp
{\par\boxed{\boxed{\text{Version: #1; Run: \today}}}}}\datver{0.1}
\newcommand{\dist}{\operatorname{dist}}
\newcommand{\CC}{\mathbb C}
\newcommand{\NN}{\mathbb N}
\newcommand{\RR}{\mathbb R}
\newcommand{\ZZ}{\mathbb Z}
\newcommand{\CIc}{{\mathcal C}^{\infty}_{\text{c}}}
\newcommand\pa{{\partial}}
\newcommand{\End}{\operatorname{End}}
\newcommand{\vol}{\operatorname{vol}}
\newcommand{\dvol}{\operatorname{dvol}}
\newcommand{\maD}{\mathcal D}
\newcommand{\maH}{\mathcal H}
\newcommand\<{\langle}
\renewcommand\>{\rangle} 
\newcommand{\ie}{\emph{i.\thinspace e.\ }}
\newcommand{\rinj}{\mathop{r_{\mathrm{inj}}}}
\newcommand\partialDM{\partial_{\kern-.04em D}\kern-.08em M}
\newcommand\partialNM{\partial_{\kern-.04em N}\kern-.08em M}
\newcommand\partialD{\partial_{\kern-.04em D}\kern-.08em}
\newcommand\partialN{\partial_{\kern-.04em N}\kern-.08em}
\newcommand\unifexp{\ignorespaces}
\newtheorem{theorem}{Theorem}[section]
\newtheorem{proposition}[theorem]{Proposition}
\newtheorem{corollary}[theorem]{Corollary}
\newtheorem{lemma}[theorem]{Lemma}
\theoremstyle{definition}
\newtheorem{definition}[theorem]{Definition}
\theoremstyle{remark}
\newtheorem{remark}[theorem]{Remark}
\newtheorem{example}[theorem]{Example}
\author[B. Ammann]{Bernd Ammann} \address{B. Ammann, Fakult\"at f\"ur
  Mathematik, Universit\"at Regensburg, 93040 Regensburg, Germany}
\email{bernd.ammann@mathematik.uni-regensburg.de}
\author[N. Gro{\ss}e]{Nadine Gro{\ss}e} \address{N. Gro{\ss}e,
  Mathematisches Institut, Universit\"at Freiburg, 79104 Freiburg,
  Germany} \email{nadine.grosse@math.uni-freiburg.de}
\author[V. Nistor]{Victor Nistor} \address{V. Nistor, Universit\'{e}
  de Lorraine, CNRS, IECL, F-57000 Metz, France
%   Math. Dept., Penn State Univ, PA 16802, USA}
and Inst. Math. Romanian Acad.  PO BOX 1-764, 014700 Bucharest Romania}
\email{victor.nistor@univ-lorraine.fr}
\thanks{B.A. and N.G. have been partially supported by SPP 2026
  (Geometry at infinity), funded by the DFG.  B.A. has also been
  partially supported by the DFG SFB 1085 (Higher
  Invariants). V.N. has been partially supported by
  ANR-14-CE25-0012-01 (SINGSTAR).\\
Manuscripts available from \textbf{http:{\scriptsize
    //}iecl.univ-lorraine.fr{\scriptsize
    /}$\tilde{}$Victor.Nistor{\scriptsize /}}\\
AMS Subject classification (2010): 58J32 (Primary), 35J57, 35R01, 35J70 (Secondary)\\
Keywords: Legendre condition, elliptic equations, elliptic systems, non-compact
complete manifolds, bounded geometry, boundary value problem}
\date{Nov 19, 2018}
\begin{document}

\title[The strong Legendre condition and well-posedness]{The strong
  Legendre condition and the well-posedness of mixed Robin problems on
  manifolds with bounded geometry}

\begin{abstract}
Let $M$ be a smooth manifold with boundary and bounded geometry,
$\pa_D M \subset \pa M$ be an \emph{open and closed} subset of the
boundary of $M$, $P$ be a second order differential operator on $M$,
and $b$ be a first order differential operator on $\pa M$.  Our
operators act on sections of a vector bundle $E \to M$ with bounded
geometry. We prove the regularity and well-posedness in the Sobolev
spaces $H^s(M; E)$, $s \geq 0$, of the \emph{mixed Dirichlet-Robin
  boundary value problem} $$Pu = f \mbox{ in } M,\ u = 0 \mbox{ on }
\pa_D M,\ \pa^P_\nu u + bu = 0 \mbox{ on } \pa M \setminus \pa_D M$$
under the following four natural assumptions. First, we assume that
$P$ satisfies the strong Legendre condition and the first order terms
are small.  (In the scalar case, the strong Legendre condition reduces
to the uniformly strong ellipticity condition.)  Second, we assume
that all the coefficients of $P$ and all their covariant derivatives
are bounded. Third, we assume that $\Re\, b \ge 0$ and that
  there is $\epsilon > 0$ and an open and closed subset $\pa_R M
  \subset \pa M \setminus \pa_D M$ such that $\Re \, b \geq \epsilon
  I$ on $\pa_R M$. Finally, we assume that the distance to $\pa_R M
\cup \pa_D M$ is uniformly bounded on $M$ and that $\pa_R M \cup \pa_D
M$ intersects all components of $\pa M$ (\ie $(M, \pa_R M \cup \pa_D
M)$ has \emph{finite width}).

We include also some extensions of our main result in different
directions. First, the finite width assumption is required for the
Poincar\'{e} inequality on manifolds with bounded geometry, a result
for which we give a new, more general proof. Second, we consider also
the case when we have a decomposition of the vector bundle $E$
(instead of a decomposition of the boundary).  Third, we also consider
operators with non-smooth coefficients, but, in this case, we need to
limit the range of $s$. Finally, we also consider the case of
uniformly strongly elliptic operators and discuss the equivalence
between the \emph{uniform Agmon condition} and the G\aa rding
inequality. The main novelty of our results is that they are
formulated on a \emph{non-compact manifold.}
\end{abstract}

\maketitle
{\hypersetup{linkcolor=black}\tableofcontents}

\section{Introduction}

This is the third paper in a series of papers devoted to the spectral
and regularity theory of differential operators on a suitable
non-compact manifold with boundary~$M$ using analytic and geometric
methods. In this paper, we extend the well-posedness result of
\cite{AGN1}, the first paper of the series, from the case of the
Laplace operator to that of operators (or systems) with non-smooth
coefficients satisfying the strong Legendre condition. Considering
systems is important in practical applications.  We also take
advantage of the general regularity results in \cite{GN17}, the second
paper in this series, to obtain results on Robin boundary
conditions. The Robin boundary conditions ``interpolate'' between
Dirichlet and Neumann boundary conditions, so are natural to consider.

We have made an extra effort to make this paper readable independently
of the previous two papers by recalling the main definitions and
results from those papers.

\subsection{Geometric and analytic settings}
We make several assumptions on the geometry and on the operators. Let
us begin by describing our geometric setting. We fix in what follows a
smooth $m$-dimensional Riemannian manifold with \emph{boundary and
  bounded geometry} $(M,g)$, see Definition~\ref{def_bdd_geo}. In
particular, $\pa M$ is smooth and a manifold with bounded geometry in
its own right. Also, we fix a vector bundle $E \to M$ with a metric
and a compatible connection $\nabla^E$.  We let $R^E$ denote the
curvature of the connection $\nabla^E$.  We assume that all the
covariant derivatives $\nabla^kR^E$ are bounded. Moreover, we assume
our boundary to be \emph{partitioned}, that is, that we are given a
disjoint union decomposition
\begin{equation}\label{eq.decomposition}
  \partial M = \partial_D M \sqcup \partial_N M \sqcup \partial_R M
\end{equation}
where $\partial_D M$, $\partial_N M$ and $\partial_R M$ are (possibly
empty) open and closed subsets of $\partial M$ and $\sqcup$ is the
\emph{disjoint union.} The indices of the notation reflect that these
will become the parts of the boundary where we will impose Dirichlet,
Neumann, and Robin boundary conditions, respectively. Let $A \subset
\pa M$. Recall from \cite{AGN1} that $(M, A)$ is said to have
\emph{finite width} if, in addition to the bounded geometry assumption
on $(M,g)$, $\text{dist}(p, A)$ is uniformly bounded in $p\in M$ and
$A$ intersects all boundary components of $\pa M$.

Let us now describe our analytic setting, which, in particular, will
describe our operators. All the differential operators considered in
this paper will be assumed to have bounded, measurable (\ie
$L^\infty$) coefficients. The most important ingredients are a bounded
sesquilinear form $a$ on $T^*M\otimes E$ and a first order
differential operator $b$ on $E|_{\pa M}$. They give rise to an operator 
$\tilde{P}_{(a,b)} \colon H^1(M;
E) \to H^1(M; E)^*$ by
\begin{equation}\label{eq.def.Pab}
  \< \tilde{P}_{(a,b)}(u), v \> \define \int_{M} a(\nabla u, \nabla v)
  \dvol_{g} + \int_{\pa_R M} (bu, v)_E \dvol_{\pa g},
\end{equation}
where $\dvol$ denotes the volume form with respect to the underlying
metric and $\<\ , \ \>$ denotes the pairing between a space $V$ and
its conjugate dual $V^*$.  (The spaces $H^1$ are recalled in the main
body of the paper). See Section~\ref{subsub.bilin.forms} for more
details. We note that Gesztesy and Mitrea have considered also
non-local operators $b$, see \cite{gesztesyMitrea2014a} and the
references therein. Let also $Q$ and $Q_1$ be first order
differential operators acting on sections of $E$. They define linear
maps $\tilde{Q}, \tilde{Q}^*_1 \colon H^1(M; E) \to H^1(M;
E)^*$. First, we let
\begin{equation}
  \label{eq.def.H1D}
  H^1_D(M; E) \define \{\, u \in H^1(M; E)\, \vert \ u = 0 \mbox{ on }
  \pa_D M \, \}.
\end{equation}
Our regularity and well-posedness results will be for the second order
differential operator
\begin{equation}\label{eq.def.tildeP}
  \tilde P \define \tilde P_{(a, b)} + \tilde Q + \tilde Q_1^*\colon
  H^1_D(M; E) \to H^1_D(M; E)^*,
\end{equation}
which encodes also the Robin boundary conditions. Ignoring these
boundary conditions via the restriction $H^1_D(M; E)^* \rightarrow
H^1_0(M; E)^*$, we obtain the ``typical'' second order differential
operator (in divergence form)
\begin{equation}\label{eq.def.P}
  P \define P_{(a, b)} + Q + Q_1^* \colon H^1_D(M; E) \to H^{-1}(M; E)
  \simeq H^1_0(M; E)^*.
\end{equation}
This operator is hence independent of $b$, unlike $\tilde P$. 

We will use the operator $\tilde P$ to study mixed Dirichlet-Robin
boundary conditions, as follows. Let $\nu$ be the outward unit vector
field at the boundary and $\pa_\nu^P$ the conormal derivative
associated to $P$. We consider the \emph{mixed Dirichlet-Robin
  boundary value problem}:
\begin{equation}\label{eq.mixed}
  \left\{\begin{array}{rl} Pu & = \ f \quad\mbox{ in } M,\\ u & = \ 0
  \quad\mbox{ on } \pa_D M, \\
  \pa^P_\nu u + bu & = \ 0 \quad \mbox{ on } \pa_N M \sqcup
  \pa_RM.\end{array} \right.
\end{equation}
The relation between this boundary value problem and $\tilde P$ is
that, in a certain sense that will be made precise below using the
maps $j_k$ of Equation~\eqref{eq.def.jk}, we have that $\tilde P(u) =
(Pu, \pa^P_\nu u + bu)$. (See \cite{GN17} for a more detailed
discussion of the difference between $P$ and $\tilde P$ and the role
of boundary conditions and \cite{Daners2000, gesztesyMitrea2009} for
some related results.) We note that the operator $\pa^P_\nu$ of the
last equation of \eqref{eq.mixed} depends only on $a$ and $Q_1$. If $P
= \Delta$, the Laplacian, then $\pa^P_\nu = \pa_\nu$ is the usual
normal derivative.

As in \cite{BNZ3D1,gesztesyMitrea2014a}, we shall typically assume for
our main results that $P$ satisfies the strong Legendre condition,
which is the condition that the bilinear form $a$ defining the
principal part $P_{(a, b)}$ of the operator $P$ be strongly coercive
(Definition \ref{def.SLC}). For scalar operators, the strong Legendre
condition is equivalent to the uniform strong ellipticity condition,
but, for systems, the strong Legendre condition is more
restrictive. 

If $T$ is a (possibly unbounded) operator on a Hilbert
space $\maH$, we shall write $T \ge \epsilon$ if $(T\xi, \xi) \ge
\epsilon (\xi, \xi)$ for all $\xi$ in the domain of $T$ and denote
$\Re\, T \define \frac{1}{2}(T + T^*)$. Recall that $\sqcup$ denotes the
disjoint union. Our main result is the following well-posedness
result.

\begin{theorem} \label{thm.main} 
Let $\ell\in \mathbb{N}$, $\ell\geq 1$, and assume that:
\begin{enumerate}[(i)]
 
\item $(M, \pa_D M \sqcup \pa_R M)$ has finite width;
 
\item $P  \define  P_{(a, b)} + Q + Q_1^*$ has coefficients in $W^{\ell,
  \infty}(M; \End(E))$ and satisfies the strong Legendre condition;
   
\item $\Re\,  b \ge 0$ and there is $\epsilon > 0$ such that
  $\Re\,  b \ge \epsilon$ on $\pa_R M$.

  \item There is $\delta > 0$ depending on $\epsilon$, $a$, $b$,
  and $(M, g)$ such that $\Re\, (Q + Q_1^*) \ge - \delta$.

\end{enumerate}
Then, for all $k \in \NN$, $1\leq k \le \ell$, $\tilde P u \define (P
u , (\pa_\nu^P u + bu)\vert_{\pa M \setminus \pa_D M})$ defines an
isomorphism
\begin{equation*}
 \widetilde P\colon H^{k+1}(M; E) \cap \{\, u \vert_{\pa_D M} = 0 \,
 \} \to H^{k-1}(M; E) \oplus H^{k-1/2}(\pa M \setminus \pa_D M; E) .
\end{equation*}
\end{theorem}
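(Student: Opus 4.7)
The plan is to split the proof into two parts: a base case, handled by a Lax--Milgram coercivity argument for the weak formulation, and the higher regularity statement $1 \le k \le \ell$, obtained by bootstrapping through the elliptic regularity theorem of \cite{GN17}.

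For the base case, I would first establish that $\tilde P$ defines an isomorphism $H^1_D(M;E) \to H^1_D(M;E)^*$. Writing $B(u,v) \define \< \tilde P u, v \>$, boundedness of $B$ on $H^1_D(M;E)\times H^1_D(M;E)$ is immediate from the $L^\infty$ hypotheses on the coefficients together with the uniform trace theorem available in bounded geometry. Coercivity is obtained by combining four estimates. The strong Legendre condition gives $\Re \int_M a(\nabla u,\nabla u)\,\dvol_g \ge c\|\nabla u\|_{L^2}^2$. The hypothesis $\Re b \ge 0$ with $\Re b \ge \epsilon$ on $\pa_R M$ yields $\Re \int_{\pa_R M}(bu,u)_E\,\dvol_{\pa g} \ge 0$. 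The smallness hypothesis $\Re(Q+Q_1^*) \ge -\delta$ controls the first-order perturbation. Finally, the Poincar\'e inequality on $(M, \pa_D M \sqcup \pa_R M)$, available because the pair has finite width, converts $\|\nabla u\|_{L^2}^2$ into a lower bound for $\|u\|_{H^1}^2$. Choosing $\delta$ small enough relative to $c$, $\epsilon$, and the Poincar\'e constant yields $\Re B(u,u) \ge \gamma \|u\|_{H^1}^2$ for all $u \in H^1_D(M;E)$, and Lax--Milgram then produces the desired weak isomorphism.

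For $1 \le k \le \ell$, I would first reduce the inhomogeneous problem with data $(f,h) \in H^{k-1}(M;E) \oplus H^{k-1/2}(\pa M \setminus \pa_D M;E)$ to the case $h=0$, by lifting $h$ through a uniformly bounded right inverse of the boundary trace (available under bounded geometry) to some $\tilde h \in H^{k+1}(M;E)$ with $\tilde h\vert_{\pa_D M}=0$ and $(\pa^P_\nu + b)\tilde h = h$ on $\pa M \setminus \pa_D M$, then solving for $u - \tilde h$. Uniqueness is inherited from the base case. For existence of an $H^{k+1}$-solution, I would invoke the elliptic regularity theorem of \cite{GN17}, which upgrades the weak solution from the base case to $H^{k+1}$ provided the coefficients lie in $W^{\ell,\infty}$, the data lie in the stated spaces, and the mixed Dirichlet--Robin boundary problem satisfies the Shapiro--Lopatinski covering condition. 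Under the strong Legendre condition the latter reduces to a standard symbol-level check and holds uniformly in $M$ by bounded geometry.

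The step I expect to be hardest is the coercivity argument at the base case, specifically securing the Poincar\'e inequality with a \emph{uniform} constant $C_P$ controlling $\|u\|_{L^2} \le C_P \|\nabla u\|_{L^2}$ for every $u \in H^1_D(M;E)$. On a non-compact manifold this is not automatic; it is precisely what hypothesis (i) (finite width of $(M, \pa_D M \sqcup \pa_R M)$) provides, through the new and more general proof of the Poincar\'e inequality that this paper announces. A secondary quantitative issue is to pin down the smallness threshold $\delta$ of hypothesis (iv) in terms of $c$, $\epsilon$, $C_P$, and the uniform trace constant; a Young-type absorption lets the first-order contribution be swallowed by the quadratic gradient term before Poincar\'e is applied.
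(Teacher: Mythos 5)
Your proposal is correct and follows essentially the same route as the paper: the $k=0$ case is exactly the paper's Theorem~\ref{thm.H1}, obtained by combining the strong Legendre condition, the positivity of $\Re\, b$, the smallness of $\Re(Q+Q_1^*)$, and the finite-width Poincar\'e inequality (Corollary~\ref{cor.Poinc1} and Lemma~\ref{lemma.eq.norms}) into strong coercivity and then applying Lax--Milgram; the passage to $1\le k\le\ell$ is the regularity estimate of Theorem~\ref{thm.regularity} imported from \cite{GN17}. The only cosmetic difference is that you reduce to homogeneous Robin data via a trace lifting, whereas the paper feeds the inhomogeneous data directly into $H^1_D(M;E)^*$ through the maps $j_k$ and applies the a priori estimate; both are standard and equivalent here.
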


This theorem follows directly from Theorem~\ref{thm.H1} and
~\ref{thm.regularity}. In fact, it does not matter what $b$ is on
$\pa_D M$. In particular, the condition $\Re \, b \ge 0$ is necessary
only on $\pa M \smallsetminus \pa_D M$.

\subsection{Comments}
The proof of our main result, Theorem \ref{thm.main} combines the
Poincar\'e inequality with regularity. More precisely, by replacing
$H^{k-1}(M; E) \oplus H^{k-1/2}(M \setminus \pa_D M; E)$ with
$H^{1}_D(M; E)^*$ as the range for $\widetilde P$, our theorem makes
sense also for $k = 0$. This pattern of proof follows the classical
case \cite{Agranovich2002, AGN1, BrezisSobolev, daugeBook, KMR,
  LionsMagenes1, NP, Taylor1, vishikGrushin}. Using the trace theorem
\cite{GrosseSchneider2013}, we can also consider non-homogeneous
Dirichlet boundary conditions in $H^{k+1/2}(\pa_D M; E)$. What is
essentially different in the non-compact case is how these two steps
(Poincar\'e inequality and regularity) are dealt with. 

For instance, the Poincar\'e inequality follows from the finite width
assumption, using the results from \cite{AGN1}. We moreover know, from
that paper, that the assumption that $(M, \pa_D M \sqcup \pa_R M)$
has finite width is necessary in general. Indeed, if $M$ is a subset
of $\RR^n$ with the induced metric such that $(M, \pa M)$ is not of
finite width, then the theorem is not true anymore. A
  counterexample is provided by a domain that coincides with a cone
in a neighborhood of infinity. The finite width assumption on $(M,
\pa_D M \sqcup \pa_R M)$ is needed in order to obtain the Poincar\'e
inequality, which implies the special case $k = 0$ of our theorem, see
Theorem \ref{thm.H1} (and is essentially equivalent to it).

For regularity, we can use either positivity (or coercivity) or a
uniform version of the Shapiro-Lopatinski conditions. We refer the
reader to \cite{GN17}, where this issue is dealt with in detail.

The reader may have wondered what happens in the strongly elliptic
case (for systems). In that case, the coercivity (\ie the G\aa rding
inequality) is equivalent to the \emph{uniform Agmon } condition for
the boundary conditions, see Subsection~\ref{sssec.coercive}. If the
uniform Agmon condition is satisfied, then one obtains the analog of
Theorem~\ref{thm.main} for~$P$ replaced with~$P + R$, for some real,
large enough $R > 0$.

\subsection{Earlier results and novelty of our results}
Recently, many results on Robin boundary conditions were obtained,
almost all devoted to \emph{bounded domains with non-smooth
  boundaries.} This is the case with the nice papers by Dancer and
Daners \cite{DancerDaners}, Daners \cite{Daners2000}, and Gesztesy and
Mitrea \cite{gesztesyMitrea2009, gesztesyMitrea2014a}, to which we
refer for more references. As seen from our result, our focus is
rather on \emph{unbounded} domains, but we assume a smooth
boundary. This allows us also to obtain certain regularity results for
our problem that do not make sense in the Lipschitz case. In fact, our
main theorem, Theorem \ref{thm.main}, is new even in the case of pure
Dirichlet or pure Neumann boundary conditions.

One of the new issues that one has to deal with in the case of
unbounded domains is the Poincar\'e inequality. The $L^1$--Poincar\'e
inequality for scalar functions and for $\pa_D M = \pa M$ was proved
in \cite{Sakurai}. The form that we need is in \cite{AGN1}. In view of
its importance and for further applications, we extended the
Poincar\'e inequality from~\cite{AGN1} to functions vanishing on
suitable subsets $A \subset \pa M$ by using a new proof based on
uniform coverings.  The extension is that we no longer assume that~$A$
be an open and closed subset of $\pa M$, but we need a slightly
stronger condition than that of $(M, A)$ being of finite width.

Theorem \ref{thm.main} was proved in \cite{AGN1} for $P = P_{(g,0)} =
\Delta_g \geq 0$, where~$g$ is the metric and $P_{(a, b)}$ is as
defined in Equation \eqref{eq.def.Pab} above. If $P = P_{(a, 0)}$ with
$E = \underline{\CC}$ (the one dimensional trivial bundle) and $a$ is
real and smooth, Theorem \ref{thm.main} then follows also from the
results of \cite{AGN1} by replacing the metric $g$ with the equivalent
metric $a$, since in the scalar case the strong Legendre condition is
equivalent to the condition that $P$ be uniformly strongly elliptic.
The general case, namely $P = P_{(a, b)} + Q + Q_1^*$, where~$Q$ and
$Q_1$ are first order differential operators, presents the following
additional difficulties:
\begin{enumerate}[(i)]
 \item If $Q + Q_1^*\neq 0$, we cannot reduce $P$ to a Laplacian, even
   if $a$ is smooth;
 \item $a$ may be not be real;
 \item $a$ may not be smooth;
 \item\label{four} the bundle $E$ may be topologically non-trivial or
   of higher dimension.
\end{enumerate}
The first three extensions are relatively easy to deal with. We deal
with $Q \neq 0$ or $Q_1 \neq 0$ by assuming that the negative part of
$Q + Q^* + Q_1 + Q_1^*$ is small enough (Condition (iv) of Theorem
\ref{thm.main}). The case when $a$ is not real simply requires to use
a complex version of the Lax-Milgram Lemma. In the case $a$ is not
smooth, we simply restrict the regularity of the resulting
solution. These three extensions of the results in \cite{AGN1}
\emph{do not} follow from the results of that paper, but can be
obtained using the methods of that paper and those in \cite{GN17},
once the additional background material in Section
\ref{sec.preliminaries} is taken into account.

The last extension, \eqref{four}, (to $E$ non-trivial, that is, to
systems) causes the most headaches and, so far, to the best of our
knowledge, is not dealt with in a completely satisfactory way
anywhere. To extend our results to systems, we chose to consider the
condition that $P$ satisfies the strong Legendre condition. This
condition is satisfied by the Hodge Laplacian $dd^* + d^*d$, but is
often too restrictive for applications.  The weaker condition (that
$P$ be uniformly strongly elliptic) is satisfied in many applications,
but it seems that for systems is does not provide results as strong as
the ones that one obtains for scalar equations.  Nevertheless, one can
obtain coercivity under some additional assumptions, see
\ref{sssec.coercive}.

 We have also included
Robin boundary conditions. Except for a few results and definitions
that we recall from~\cite{AGN1,GN17}, the first two papers of this
series, our paper can be read independently of those papers, as we
recall in Section~\ref{sec.preliminaries} the most important
definitions and results from those papers.

\subsection{Contents of the article}
The article is organized as follows. Section~\ref{sec.preliminaries}
is devoted to preliminaries, including a discussion of Sobolev spaces,
of differential operators on Riemannian manifolds from a global point
of view, and to some background material on manifolds with bounded
geometry from~\cite{AGN1}. The proof of the Poincar\'e inequality is
in Section~\ref{sec.poincare}. The last section contains the proofs of
our main results, which, in turn, yield Theorem~\ref{thm.main}. We
also discuss there some extensions of our results in
Subsection~\ref{ssec.ae}, including the uniform Agmon condition.

\section{Background, notation, and preliminary results}
\label{sec.preliminaries}

We recall here some basic material, for the benefit of the reader. We
also use this opportunity to fix the notation. For instance, \emph{$M$
  will always denote a smooth $m$-dimensional Riemannian manifold,
  possibly with boundary.} The metric of $M$ will be denoted by $g$
and the associated volume form will be denoted by $\dvol_g$.  The
boundary is denoted by $\pa M$, and is assumed to be smooth, for
simplicity, although some intermediate results may hold in greater
generality. We assume that the boundary is partitioned as in Equation
\ref{eq.decomposition}. See \cite{AGN1} or \cite{GN17} for concepts
and notation not fully explained here.

\subsection{General notations and definitions}\label{ssec.not} 
We begin with the most standard concepts and some notation.

\subsubsection{Vector bundles} 
Let $E \to M$ be a smooth real or complex vector bundle endowed with
metric~$(.,.)_E$ and a connection
\begin{equation*}
  \nabla^E \colon \Gamma(M; E) \to \Gamma(M; E \otimes T^{*}M).
\end{equation*}
We assume that $\nabla^E$ is metric preserving, which means that
\begin{equation*}
   X ( \xi, \eta )_E = ( \nabla_X \xi, \eta )_E + (\xi, \nabla_X
   \eta )_E.
\end{equation*}

We endow the tangent bundle $TM\to M$ with the Levi-Civita connection.

\begin{definition}\label{def_ttly_bdd_curv} 
A vector bundle $E \to M$ with given connection \emph{has totally
  bounded curvature} if its curvature and all its covariant
derivatives are bounded (that is, $\|\nabla^k R^E\|_\infty < \infty$
for all $k$).  If $TM$ has totally bounded curvature, we shall then
say that $M$ has \emph{totally bounded curvature}.
\end{definition}

\subsubsection{Sobolev spaces}\label{sec_Sob}
Let us recall the basic definitions related to Sobolev spaces. See
\cite{AmannFunctSp, AubinNL98, EichornSobolev, HebeyBook,
  kordyukovLp1, ThalmaierWang} for related results.  The $L^p$-norm
$\|u\|_{L^p(M;E)}$ of a measurable section of $u$ of $E$ is then
\begin{align*}
 \begin{gathered}
 \ \ \|u\|_{L^p(M; E)}^p \ede \int_{M}\, \ne{u(x)}_E^p\, \dvol_g(x)\,,
 \ \text{ if } \ 1\leq p < \infty\,, \text{ and } \\
 \ \ \|u\|_{L^\infty(M; E)} \ede \esssup_{x \in M}\, \ne{u(x)}_E\,,
 \end{gathered}
\end{align*}
as usual. Let $\ell \in \ZZ_+ = \NN \cup \{0\}$. We define $L^p(M; E)
\define \{ u\, \vert \ \|u\|_{L^p(M; E)} < \infty \}$ and
\begin{align*}
 W^{\ell, p} (M; E) \define\{\, u \, \vert \ \nabla\sp{j} u \in L^p(M;
 E \otimes T^{*\otimes j}M)\,,\ \forall\,j \leq \ell \, \} .
\end{align*}
We let $W^{\infty, p} \define \bigcap_{\ell} W^{\ell, p}$.  

If $M$ has a smooth boundary $\pa M$ and $\pa_D M \subset \pa M$ is an
open and closed subset of $\pa M$, we define
\begin{align}\label{eq.def.WmD}
 W^{\ell,p}_{D}(M; E) \ede \text{closure}_{W^{\ell,p}(M; E)}
 {C_c^\infty(M \setminus \pa_D M; E)\,},
\end{align}
the closure in $W^{\ell,p}(M; E)$ of the space of smooth sections of
$E \to M$ that have compact support not intersecting~$\pa_D M$. As
usual, we shall use the notation
\begin{align}\label{eq.def.HlD}
 H^\ell(M; E) \ede W^{\ell, 2} (M; E) \quad \mbox{and} \quad
 H^{\ell}_D(M; E) \ede W^{\ell, 2}_D(M; E)
\end{align}
in the Hilbert space case ($p = 2$). If $\pa_D M = \pa M$, we simply
write $W_0^{\ell, p}(M; E) \define W_D^{\ell, p}(M; E)$ and $H_0^\ell
(M; E) \define W_0^{\ell, 2}(M; E)$. For manifolds with bounded
geometry, these spaces can be characterized using the trace theorem,
see \cite{GrosseSchneider2013}.

As in \cite{gesztesyMitrea2009, GN17}, we denote by $V^*$ the \emph{
  complex conjugate} dual of the Banach space~$V$. If $-s\in \NN$, we
define $H^{s}(M; E) \simeq H_0^{-s}(M; E)^*$.  If $M$ has no boundary
and $s\in \RR$, then the spaces $H^s(M; E)$ are defined by
interpolating the spaces $H^\ell(M; E)$, $\ell\in \ZZ$.  See
\cite{BrezisSobolev, JostBook, LionsMagenes1, Taylor1} for the case of
manifolds with boundary.

\subsection{Differential operators}
We recall now differential operators on manifolds from a global point
of view.

\subsubsection{General definitions}\label{sec:gd}
A \emph{differential operator} of order $k$ is an expression of the
form $P \ede \sum_{j=0}^k a_j \nabla^j,$ with $a_j$ a section of
$\End(E) \otimes TM^{\otimes j}$.  A differential operator $P =
\sum_{j=0}^k a_j \nabla^j$ will be said to \emph{have coefficients
  in} $W^{\ell, \infty}$ for $\ell \in \ZZ_+ \cup \{\infty\}$ if $a_j
\in W^{\ell, \infty}(M; \End(E) \otimes TM^{\otimes j})$ for all $0
\leq j \leq k$.  If $\ell = 0$, we shall say that $P$ has
\emph{bounded} coefficients.  If $\ell = \infty$, we shall say that
$P$ has \emph{totally bounded} coefficients. We then obtain a
  bounded operator
\begin{align*}
  P \seq \sum_{j=0}^k a_j \nabla^j \colon W^{\ell+k, p}(M; E) \, \to
  \, W^{\ell, p}(M; E), \quad \ell \geq 0.
\end{align*}

\subsubsection{Bilinear forms and operators in
  divergence form}\label{subsub.bilin.forms} We now consider
differential operators in divergence form, which will allow us to
treat the Robin boundary conditions on same footing as the Dirichlet
boundary conditions. See \cite{GN17} for more details. See also
\cite{Daners2000,gesztesyMitrea2014a}. Assume that, for each $x\in M$,
we have a sesquilinear map $a_x \colon T^*_x M \otimes E_x \times
T^*_x M \otimes E_x \to \CC$. The family $(a_x)$ defines a section $a$
of the bundle $((T^*M\otimes E) \otimes (T^{*}M\otimes \overline
E))'$. We say that the section $a=(a_x)_{x\in M}$ is a \emph{bounded,
  measurable sesquilinear form on $T^*M \otimes E$} if it is an
$L^\infty$-section of $((T^*M\otimes E) \otimes (T^{*}M\otimes
\overline E))'$. Let us also consider a first order differential
operator $b$ on $E|_{\pa M}$. The \emph{Dirichlet form} $B_{(a,
  b)}$ on $H^1_{D}(M; E)$ associated to such a bounded family of
sesquilinear forms $a$ and endomorphism section $b$ is then
\begin{align}\label{eq.def.B}
 B_{(a, b)}(u, v) \define B_{(a, b)}^g(u, v) \define \int_{M} a(\nabla
 u, \nabla v) \dvol_{g} +  \int_{\pa M \smallsetminus \pa_D M }
   (bu, v)_E \dvol_{\pa g}.
\end{align}
Note that $\<\tilde{P}_{(a,b)}(u),v\>=B_{(a, b)}(u, v)$ by \eqref{eq.def.Pab}. 
If $Q$ is a first order differential operator with bounded
coefficients, then it also defines a continuous map $\tilde Q \colon
H^1_{D}(M; E)\rightarrow L^2(M; E) \subset H_D^{1}(M;E)^*$.  The
adjoint $\tilde Q^*$ of $\tilde Q$ will then map $\tilde Q^* \colon
H^1_{D}(M; E)\rightarrow H_D^{1}(M;E)^*$ as well.
The sesquilinear form $B_{(a, b)}$ and the differential operators
$Q$ and $Q_1$ then define the differential
operators $\tilde P_{(a, b)}$, $\tilde P$, and $P$ of
Equations (\ref{eq.def.Pab}-\ref{eq.def.P}).

\begin{definition}\label{def.divergence}
We shall say that $ \tilde P = \tilde P_{(a, b)} + \tilde Q + \tilde
Q_1^* \colon H^1_{D}(M; E) \longrightarrow H_D^{1}(M;E)^* $ and $P
\define P_{(a, b)} + Q + Q_1^*$ are second order \emph{differential
  operators in divergence form} if $a$ is a bounded, measurable
sesquilinear form on $T^*M \otimes E$, $b = b_1 + b_2$ is a first
order differential operator on $E|_{\pa M}$, with $b_1$ with
$W^{1,\infty}$ coefficients and $b_2$ a bounded, measurable
endomorphism of $E|_{\pa M}$, and $Q$ and $Q_1$ are first order
differential operators with bounded coefficients. In particular, $P$
will have bounded coefficients.
\end{definition}

\begin{remark}
We have by definition
\begin{equation}\label{eq.B.tildeP}
  \< \tilde P u, v \> \define B(u, v) \define(\nabla u, \nabla v) +
  (bu, v)_{\pa M \smallsetminus \pa_D M} + (Qu, v ) + (u, Q_1
  v )
\end{equation} 
where, we recall, $\<.,.\>$ denotes the dual pairing and $(.,.)_N$ denotes the
$L^2$-product on the manifold $N$ (in case $N=M$ we omit the index).
\end{remark}

\subsubsection{Boundary value problems}\label{sssec.bvp}
We are interested in differential operators in divergence form $\tilde
P \colon H^1_D(M; E)\to H^1_D(M; E)^*$ because we have the equivalence of
the following two problems
\smallskip

\begin{enumerate}[(i)]
 \item The operator $\tilde P \colon H^1_D(M; E) \to H^1_D(M; E)^*$ is
   an isomorphism. \smallskip
   
 \item For each $F \in H^1_D(M; E)^*$ and $h \in H^{1/2}(\pa_D M; E)$,
   the ``weak'' problem
 \begin{align*}%\label{eq.form.three}
  \begin{cases}  
  \tilde P(u)(v) = F(v) & \text{for all } v\in H^1_D(M; E)
  \\ \qquad\quad u = h & \text{on } \pa_D M
  \end{cases} 
 \end{align*}
 has a unique solution $u \in H^1(M; E)$, depending continuously on
 $F$ and $h$. \smallskip
 
\hspace*{-15mm} The well-posedness of these problems implies then the
well-posedness of
 \smallskip
   
 \item Let $f \in L^2(M; E)$, $h \in H^{3/2}(\pa_D M; E)$, and $h_1
   \in H^{1/2}(\pa M \setminus \pa_D M; E)$. The boundary value
   problem
 \begin{align}\label{eq.form.two}
 \begin{cases}
    \quad\quad\quad Pu &= f \ \quad \text{in }\ \ M \\
   \ \ \quad\quad\quad u &= h \ \quad \text{on }\ \pa_D M \\
   \ \pa_\nu^P u + bu & = h_1 \quad \text{on }\ \pa M \setminus \pa_D
   M
   \end{cases}
 \end{align}
 has a unique solution $u \in H^2(M; E)$, depending continuously on
 $f$, $h$, and $h_1$. This is obtained by taking $F(v) \define \int_M (f,
 v)_E \dvol_g + \int_{\pa M \setminus \pa_D M} (g, v)_E \dvol_{\pa g}$ and using
 higher regularity. See Corollary \ref{cor.well.posedness.bvp_2}
 below.
\end{enumerate}

\noindent For higher regularity of the data, we obtain the usual
formulation of mixed boundary value problems. See Subsection
\ref{ssec.four}.  In particular, see
\cite{Daners2000,gesztesyMitrea2014a,GN17} for the need of $Q_1^*$ in
the statement of the main theorem (Theorem \ref{thm.main}) and for how
$Q_1^*$ affects the boundary conditions (\ie  the boundary operator
$\pa_\nu^P$). Note that the well-posedness of Problem
\eqref{eq.form.two} implies right away that of Problem
\eqref{eq.mixed}. The converse is also true in view of the trace
theorem of \cite{GrosseSchneider2013}, since $\pa M$ was assumed to be
smooth.

The best way to understand the operator $\pa_\nu^P$ is using 
boundary triples \cite{Behrndt2015,Post2007}. See \cite{Daners2000, GN17} 
for explicit definitions of
$\pa^P_\nu$ in local coordinates.

\subsection{Manifolds with boundary and bounded geometry\label{ssec.bg}}
We first recall some basic material on manifolds with boundary and
bounded geometry from \cite{AGN1}, to which we refer for more details
(see also \cite{ElderingThesis, Schick2001}).  As in \cite{AGN1}, we
will only assume that our manifolds are paracompact (thus we will
\emph{not require} our manifolds to be second countable).

If $x, y \in M$, then $\dist(x, y)$ denotes the distance between $x$
and $y$ with respect to the metric $g$. If $N \subset M$, then
\begin{equation}\label{eq.def.Ur}
   U_r(N) \ede \{x \in M \mid\, \exists y \in N,\, \dist(x, y) < r\,\}
\end{equation}
will denote the $r$-neighborhood of $N$, that is, the set of points of
$M$ at distance $< r$ to $N$. Thus, if $E$ is a Euclidean space, then
$B_r^E(0)\define U_r(\{0\}) \subset E$ is simply the ball of radius
$r$ centered at $0$.

Let $N$ be a hypersurface in $M$, \ie a submanifold with $\dim
N=\dim M -1$.  We assume that $N$ carries a globally defined normal
vector field $\nu$ of unit length, simply called a \emph{unit normal
  field}, which will be fixed from now on.  The Levi--Civita
connection for the induced metric on $N$ will be denoted by
$\nabla^{N}$.  The symbol $\II^N$ will denote the \emph{second
  fundamental form} of $N$ (in $M$: $\II^N(X,Y)\nu \ede \nabla_XY -
\nabla^{N}_XY$).

Let $\exp_p^M\colon T_pM \to M$ be the exponential map at $p$
associated to the metric and
\begin{align*}
\begin{gathered}
  \rinj(p) \ede \sup\{ r \mid \exp_p^M \colon B_r^{T_pM}(0) \to M
  \text{ is a diffeomorphism onto its image} \}\\
 \rinj(M) \ede \inf_{p \in M} \rinj(p).
\end{gathered}
\end{align*}

\begin{definition}%\label{def_bdd_geo_no_bdy}
A Riemannian manifold without boundary $(M,g)$ is said to be of
\emph{bounded geometry} if $\rinj(M)>0$ and if $M$ has \emph{totally
  bounded curvature}.
\end{definition}

If $M$ has boundary, then $\rinj(M)=0$. Let $\exp^\perp(x,t) \ede
\exp^M_x(t\nu_x)$.

\begin{definition}\label{hyp_bdd_geo} 
Let $(M^{m},g)$ be a Riemannian manifold of bounded geometry with a
hypersurface $H = H^{m-1}\subset M$ and a unit normal field $\nu$ on
$H$. We say that $H$ is a \emph{bounded geometry hypersurface} in $M$
if the following conditions are fulfilled:
\begin{enumerate}[(i)]
\item $H$ is a closed subset of $M$;
\item $\|(\nabla^H)^k \II^H \|_{L^\infty} < \infty$ for all $k \geq 0$;
\item $\exp^\perp\colon H\times (-\delta,\delta)\to M$ is a
  diffeomorphism onto its image for some $\delta > 0$.
\end{enumerate}
\end{definition}

As we have shown in \cite{AGN1}, we have that the Riemannian manifold
$(H, g|_H)$ in the above definition is then a manifold of bounded
geometry. See also \cite{ElderingCR, ElderingThesis} for a larger
class of submanifolds of manifolds with bounded geometry. We shall
denote by $r_\pa$ the largest value of $\delta$ satisfying the last
condition of the last definition. Recall from \cite{Schick2001} the
following definition (the precise form below is from \cite{AGN1}):

\begin{definition}\label{def_bdd_geo}  
 A Riemannian manifold~$M$ with (smooth) boundary has \emph{bounded
   geometry} if there is a Riemannian manifold $\widehat M$ with
 bounded geometry satisfying
\begin{enumerate}[(i)]
\item $M$ is contained in $\widehat M$;
\item $\partial M$ is a bounded geometry hypersurface in $\widehat M$.
\end{enumerate}
\end{definition}

\begin{example} Lie manifolds have bounded geometry 
\cite{sobolev, aln1}. It follows that Lie manifolds with boundary are
manifolds with boundary and bounded geometry.
\end{example}

For our well-posedness results, we shall also need to assume
  that $M \subset U_R(\partial_D M \cup \partial_R M )$, for some $0 <
  R < \infty$, and hence, in particular, that $\partial_D M \cup
  \partial_R M \neq \emptyset$.

\begin{definition}\label{def.finite.width}
If $M$ is a manifold with boundary and bounded geometry, if $A \subset
\pa M$ and $M \subset U_R(A)$, for some $0 < R < \infty$, we shall say
that $(M, A)$ \emph{has finite width}.
\end{definition}

Since we let $\dist(x, y) = \infty$ if $x$ and $y$ belong to different
components of $M$, the condition that $(M, A)$ have finite width then
implies, in particular, that $A$ intersects every component of
$M$. See \cite{AmannMaxReg, BaerGinoux, BaerWafo, ElderingThesis,
  gerardBG, GerardWrochna, Krainer} for applications of manifolds of
bounded geometry.

Vector bundles with totally bounded curvature defined on manifolds
  with bounded geometry are called \emph{vector bundles with bounded
    geometry.}

\section{The Poincar\'e inequality}\label{sec.poincare}

We now give a new proof of the Poincar\'e inequality in \cite{AGN1,
  Sakurai} and generalize it by allowing more general subsets of the
boundary where the function vanishes. We assume from now on that
$M$ is a Riemannian manifold with boundary and bounded geometry.

\subsection{A uniform Poincar\'e inequality for bounded domains}
We shall need the following extension of the Poincar\'e inequality
(see \cite{BrennerScott, Ciarlet} or \cite[\S 5.8.1]{EvansBook}),
which is proved (essentially) in the same way as the classical result.

\begin{proposition}\label{prop.gen.poincare}
Assume that $\Omega$ is a connected domain of finite volume in a
Riemannian manifold $(M,g)$ such that $H^1(\Omega) \to L^2(\Omega)$ is
a compact operator. Let $K \subset L^2(\Omega)^*$ be a bounded, weakly
closed set of continuous linear functionals such that $L(1) \neq 0$
for all $L \in K$. Then there is $C > 0$ such that, for any $f \in
H^1(\Omega)$ and any $L \in K$, we have
\begin{equation*}%\label{eq.PoincareG}
  \int_{\Omega} |f|^2 \dvol_g\, \leq \, C \Big ( \, \int_{\Omega}
  |\nabla f|^2 d \vol_g + |L(f)|^2 \, \Big ) .
\end{equation*}
\end{proposition}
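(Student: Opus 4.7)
The plan is to argue by contradiction, following the standard Rellich--compactness scheme used for the classical Poincaré inequality, but carrying the functional $L$ along.

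Suppose the claim fails. Then there exist sequences $f_n \in H^1(\Omega)$ and $L_n \in K$ such that
\[
  \|f_n\|_{L^2(\Omega)}^2 \ >\ n \bigl(\, \|\nabla f_n\|_{L^2(\Omega)}^2 + |L_n(f_n)|^2 \,\bigr).
\]
After normalizing so that $\|f_n\|_{L^2(\Omega)} = 1$, we obtain $\|\nabla f_n\|_{L^2(\Omega)} \to 0$ and $L_n(f_n) \to 0$. In particular $(f_n)$ is bounded in $H^1(\Omega)$.

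Next I would extract limits. By the assumed compactness $H^1(\Omega) \hookrightarrow L^2(\Omega)$, a subsequence converges to some $f$ in $L^2$. Combined with $\nabla f_n \to 0$, this upgrades to convergence in $H^1(\Omega)$ with $\nabla f = 0$; since $\Omega$ is connected, $f$ is a constant, and $\|f\|_{L^2} = 1$ forces $f = c\cdot 1$ with $c \neq 0$. Meanwhile $K$ is bounded in $L^2(\Omega)^*$, so $L^2(\Omega)$ being Hilbert, Banach--Alaoglu gives a further subsequence with $L_n \rightharpoonup L$ weakly, and weak closedness of $K$ ensures $L \in K$. Since $\|L_n\|$ is uniformly bounded, $|L_n(f_n) - L_n(f)| \le \|L_n\|\cdot\|f_n - f\|_{L^2} \to 0$, so $L_n(f) \to 0$; by weak convergence, $L_n(f) \to L(f) = c\, L(1)$, and hence $L(1) = 0$. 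This contradicts the standing hypothesis $L(1) \neq 0$ for all $L \in K$, completing the argument.

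The only subtlety I anticipate is the weak-closure step: one must be sure that the weak topology in play on $L^2(\Omega)^*$ is the one in which $K$ is closed, and that $1 \in L^2(\Omega)$ (which holds because $\vol(\Omega) < \infty$) so that $L(1)$ is well defined and $L_n(1) \to L(1)$ along the weakly convergent subsequence. Once that is in hand, the proof is just the classical Rellich argument with the extra book-keeping of $L_n$.
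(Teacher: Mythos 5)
Your proof is correct and follows essentially the same compactness–contradiction argument as the paper: normalize the minimizing sequence, use Rellich compactness and connectedness to produce a constant limit, extract a weak limit $L\in K$ from the bounded, weakly closed set, and derive $L(1)=0$. The only (harmless) difference is that you normalize in $L^2$ rather than in $H^1$, which lets you conclude directly from $L(1)=0$ instead of from $\|f_n\|_{H^1}=1$ tending to $0$.
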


\begin{proof} 
Let us assume, by contradiction, that the contrary is true. Then we
can find a sequence $f_n \in H^1(\Omega)$ and a sequence $L_n \in K$
such that
\begin{equation}\label{eq.wrong}
  \int_{\Omega} |f_n|^2 d \vol_g\, > \, n \Big ( \, \int_{\Omega}
  |\nabla f_n|^2 d \vol_g + |L_n (f_n)|^2 \, \Big ) .
\end{equation} 
By replacing $f_n$ with $\|f_n\|_{H^1(\Omega)}^{-1}f_n$, we may assume
that $\|f_n\|_{H^1(\Omega)} = 1$. Then Equation \eqref{eq.wrong} gives
that $\nabla f_n \to 0$ in $L^2(\Omega)$ in norm and that $L_n(f_n)
\to 0$.

Since the unit ball in a Hilbert space is a weakly compact set (by the
Alaoglu-Bourbaki theorem) and we are dealing with a separable Hilbert
space (so the weak topology on the unit ball of $H^1(\Omega)$ is
metrisable), the sequence $f_n$ has a subsequence weakly converging in
$H^1(\Omega)$ to some $v \in H^1(\Omega)$. We replace the original
sequence with that sequence.  Then $\nabla f_n$ converges weakly to
$\nabla v$ in $L^2(\Omega)$, since $\nabla \colon H^1(\Omega) \to
L^2(\Omega)$ is continuous. Therefore $\nabla v = 0$ since we have
seen that $\nabla f_n \to 0$ in $L^2(\Omega)$ in norm. Since $\Omega$
is connected, it follows that $v$ is a constant.

Since $H^1(\Omega) \to L^2(\Omega)$ is a compact operator (by
assumption), we obtain that $f_n$ converges to $v$ in norm in
$L^2(\Omega)$. Since $K$ was assumed to be bounded and weakly closed,
it is weakly compact. We thus have that, by passing to a subsequence,
we may also assume that $L_n$ converges weakly to some $L \in K$. We
thus obtain that $L_n(f_n) \to L(v)$, and hence $L(v) = 0$. Since $v$
is a constant and $L(1) \neq 0$ (since $L \in K$), we obtain $v =
0$. This gives $$1 = \|f_n\|_{H^1(\Omega)}^2 = \|f_n\|_{L^2(\Omega)}^2
+ \|\nabla f_n\|_{L^2(\Omega)}^2 \to \|v\|_{L^2(\Omega)}^2 + 0 = 0,$$
which is a contradiction.
\end{proof}

% ADDITION >> 
We can replace the assumption that $K \subset L^2(\Omega)^*$ be a
bounded, weakly closed set of continuous linear functionals with the
assumption that $K \subset H^1(\Omega)^*$ be a (norm) compact subset.
We shall need the following two corollaries (which hold in greater
generality, but, for simplicity, we state the case that we need).

\begin{corollary}\label{cor.uniform}
 Let $\Omega$ be an open ball in $\RR^n$ and $\epsilon > 0$. Then
 there exists $C > 0$ such that, for any $B \subset \Omega$ a subset
 of measure $\ge \epsilon$, we have
 \begin{equation*}
   \int_{\Omega} |f|^2 dx\, \leq \, C \Big ( \, \int_{\Omega} |\nabla
   f|^2 dx + \int_{B} |f|^2 dx \, \Big )
 \end{equation*}
for all $f\in H^1(\Omega)$.
\end{corollary}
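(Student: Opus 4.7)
The plan is to deduce the corollary from Proposition \ref{prop.gen.poincare} by choosing a suitable weakly closed family $K$ of functionals indexed by the sets $B$. Since $\Omega$ is a ball, it is connected, has finite volume, and the embedding $H^1(\Omega) \to L^2(\Omega)$ is compact by Rellich--Kondrachov, so the hypotheses of the proposition are available to us.

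For each measurable $B \subset \Omega$ with $|B| \geq \epsilon$, let $L_B \in L^2(\Omega)^*$ be given by $L_B(f) \define \int_B f\, dx$, which is nothing but pairing with the characteristic function $\chi_B \in L^2(\Omega)$. Then $\|L_B\| \le |\Omega|^{1/2}$, so the family $K_0 \define \{L_B\}$ is norm-bounded; let $K$ be its weak-$*$ closure in $L^2(\Omega)^*$. Weak closures of bounded sets are bounded and weakly closed, so the only nontrivial point is $L(1) \neq 0$ for every $L \in K$. If $L_{B_n} \to L$ weakly, then in particular $L(1) = \lim L_{B_n}(1) = \lim |B_n| \ge \epsilon > 0$, so this holds. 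Proposition \ref{prop.gen.poincare} therefore supplies a constant $C_0$ with
\begin{equation*}
\int_\Omega |f|^2\, dx \ \le \ C_0 \Big( \int_\Omega |\nabla f|^2\, dx + |L_B(f)|^2 \Big)
\end{equation*}
for every $f \in H^1(\Omega)$ and every admissible $B$.

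To finish, I would eliminate $|L_B(f)|^2$ in favor of $\int_B |f|^2 dx$ via Cauchy--Schwarz:
\begin{equation*}
|L_B(f)|^2 \ = \ \Big|\int_B f\, dx\Big|^2 \ \le \ |B| \int_B |f|^2 dx \ \le \ |\Omega| \int_B |f|^2 dx .
\end{equation*}
Substituting this into the previous inequality gives the claim with $C \define C_0 \max(1, |\Omega|)$.

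The one subtle point is the verification that $L(1) \ge \epsilon$ survives the passage to the weak closure; everything else is a direct unpacking of Proposition \ref{prop.gen.poincare}. An alternative, more concrete approach would be to apply the proposition once with a single $L_0(f) = \int_\Omega f\, dx$ to reduce to a weighted inequality on mean-zero functions, and then bound the mean in terms of $\int_B |f|^2 dx$ by contradiction; but working with the family $K$ as above gives the uniform dependence on $B$ in one step and avoids duplicating the compactness argument.
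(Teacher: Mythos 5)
Your proposal is correct and follows essentially the same route as the paper: both apply Proposition \ref{prop.gen.poincare} to the functionals $L_B(f)=\int_B f\,dx$ inside a bounded, weakly closed set $K$ with $L(1)\ge\epsilon$, and then remove $|L_B(f)|^2$ by Cauchy--Schwarz against $\chi_B$. The only cosmetic difference is that the paper takes $K=\{L:\|L\|\le\vol(\Omega)^{1/2},\ L(1)\ge\epsilon\}$ outright (norm closed, convex, bounded, hence weakly compact), whereas you take the weak closure of $\{L_B\}$ and check that $L(1)\ge\epsilon$ survives the limit --- both verifications are valid.
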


\begin{proof}
We consider $K \define \{L \in L^2(\Omega)^* \, \vert\ \|L\| \le
\vol(\Omega)^{\frac{1}{2}}, L(1) \geq \epsilon \}$ which is norm
closed, convex, and bounded. Hence it is weakly compact.  Then $L(u)
\define \int_B u \dvol_g$ is in $K$, whenever $B \subset \Omega$ is a
subset of measure $\ge \epsilon$. Proposition \ref{prop.gen.poincare}
then gives
\begin{equation*}%\label{eq.PoincareG}
   \int_{\Omega} |f|^2 dx\, \leq \, C \Big ( \, \int_{\Omega} |\nabla
   f|^2 dx + \big|\int_{B} fdx \big|^2 \, \Big ) ,
\end{equation*}
for some $C$ independent of $f \in H^1(\Omega)$ and of $B$ (of measure
$\ge \epsilon$).  The result then follows from the Cauchy-Schwarz
inequality applied to $f$ and the characteristic function of $B$: $|
\int_{B} fdx |^2 \le \Big ( \int_{B} dx \Big ) \, \int_{B} |f|^2 dx
\le \vol(\Omega) \int_{B} |f|^2 dx$.
\end{proof}

Similarly, we obtain the following corollary.

\begin{corollary}\label{cor.uniform2}
Let $\Omega$ be an open ball in $\RR^n$, $\Omega = \Omega' \times
[0, r]$, and $\epsilon > 0$. Then there exists $C > 0$ such that, for
any $B \subset \Omega \times \{0\}$ a subset of measure $\ge
\epsilon$, we have
\begin{equation*}
   \int_{\Omega} |f|^2 dx\, \leq \, C \Big ( \, \int_{\Omega} |\nabla
   f|^2 dx + \int_{B} |f|^2 dx' \, \Big ) .
\end{equation*}
\end{corollary}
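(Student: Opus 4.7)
The plan is to reduce Corollary \ref{cor.uniform2} directly to Corollary \ref{cor.uniform} by thickening the boundary set $B$ into the bulk along the normal direction. Fix some $\delta \in (0, r]$ once and for all and, given $B \subset \Omega' \times \{0\}$ with $|B| \geq \epsilon$, set $\tilde B := B \times [0, \delta] \subset \Omega$; then $|\tilde B| = \delta\,|B| \geq \delta\epsilon$, which is a fixed positive lower bound on the $n$-dimensional Lebesgue measure of $\tilde B$.

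Next I would derive the elementary one-dimensional estimate: for $f \in C^1(\overline\Omega)$ (and then for $f \in H^1(\Omega)$ by density, with $f(x',0)$ interpreted via the trace), the fundamental theorem of calculus combined with Cauchy-Schwarz gives, for all $y \in [0,\delta]$,
\[
|f(x',y)|^2 \ \leq \ 2|f(x',0)|^2 + 2\delta \int_0^\delta |\partial_t f(x',t)|^2\, dt.
\]
Integrating over $y \in [0,\delta]$ and $x' \in B$ yields
\[
\int_{\tilde B} |f|^2\, dx \ \leq \ 2\delta \int_B |f(x',0)|^2\, dx' + 2\delta^2 \int_\Omega |\nabla f|^2\, dx.
\]

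Finally, applying Corollary \ref{cor.uniform} (with $\epsilon$ there replaced by $\delta\epsilon$) gives a constant $C_0 > 0$ depending only on $\Omega$ and $\delta\epsilon$, hence independent of $B$, with
\[
\int_\Omega |f|^2\, dx \ \leq \ C_0 \Big( \int_\Omega |\nabla f|^2\, dx + \int_{\tilde B} |f|^2\, dx \Big).
\]
Substituting the previous estimate into this bound yields the claimed inequality with, for instance, $C := C_0(1 + 2\delta^2 + 2\delta)$. The main (and essentially only) difficulty is ensuring that the constant is uniform in the sampling set $B$: this is resolved by the thickening, since $|\tilde B| \geq \delta\epsilon$ provides a single uniform lower bound to which Corollary \ref{cor.uniform} can be applied, and the one-dimensional comparison between $\tilde B$ and $B$ is itself uniform in $B$. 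No further contradiction argument or trace theorem beyond the density interpretation of $f(x',0)$ is needed.
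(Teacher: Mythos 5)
Your proof is correct, but it takes a genuinely different route from the paper. The paper disposes of Corollary~\ref{cor.uniform2} with the single word ``similarly'': the intended argument is to rerun the compactness/contradiction scheme of Proposition~\ref{prop.gen.poincare} with the boundary functionals $L_B(u)=\int_B u\,dx'$, which live in $H^1(\Omega)^*$ rather than $L^2(\Omega)^*$ (this is exactly why the remark after Proposition~\ref{prop.gen.poincare} allows $K$ to be a norm-compact subset of $H^1(\Omega)^*$; the compactness of the trace map is what makes that set compact). You instead give a direct, quantitative reduction to Corollary~\ref{cor.uniform}: thicken $B$ to $\tilde B=B\times[0,\delta]$, control $\int_{\tilde B}|f|^2$ by $\int_B|f(\cdot,0)|^2\,dx'$ plus the gradient term via the fundamental theorem of calculus and Cauchy--Schwarz, and then invoke the interior version on $\tilde B$. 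Both arguments are sound; yours has the advantage of producing explicit constants and avoiding any compactness of the trace operator, while the paper's soft argument adapts more readily to general families of functionals. Two small points to tidy up: (i) you are applying Corollary~\ref{cor.uniform} to the half-cylinder $\Omega=\Omega'\times[0,r)$, which is not literally ``an open ball'' as stated there --- this is harmless because Proposition~\ref{prop.gen.poincare} only needs $\Omega$ connected, of finite volume, and with $H^1(\Omega)\hookrightarrow L^2(\Omega)$ compact, all of which hold for the cylinder (and the paper itself acknowledges the corollaries hold in greater generality than stated); (ii) you should say a word about why both sides of the final inequality are continuous in $f$ with respect to the $H^1$-norm (the boundary term via the trace theorem), so that the density argument from $C^1(\overline\Omega)$ closes.
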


\subsection{Proof of the Poincar\'e inequality}
Next we globalize the above inequalities to manifolds $M$ with
boundary and bounded geometry.  We assume --- following
Definition~\ref{def_bdd_geo} --- that $M$ is embedded in a
boundaryless manifold $\widehat M$ of the same dimension, of bounded
geometry and without boundary, such that $\partial M$ is a bounded
geometry hypersurface in $\widehat M$. Recall that $U_r(A)$ denotes
the set of points of $M$ at distance $<r$ to $A$. We use the notation
in \cite{AGN1}, which we recall now: Let $\{p_\gamma\}_{\gamma \in I}$
be a subset of $M$ and $0 < 3r < \min\{\rinj(M), \rinj(\pa M),
r_{\pa}\}$, where~$r_\pa$ is the largest value of $\delta$ satisfying
the last condition of Definition \ref{hyp_bdd_geo} for $H = \pa M$ and
for $M$ replaced with $\widehat M$.  We let $W_\gamma \define
W_\gamma(r) \define U_r(p_\gamma)$, if $p_\gamma$ is an interior point
of $M$; otherwise we let $W_{\gamma} \define W_{\gamma}(r) \define
\exp^{\perp}(B^{T\pa M}_{r}(0) \times [0,r))$.

\begin{definition}\label{def.FC}  
Let $(M^m,g)$ be a manifold with boundary and bounded geometry and
assume $0 < 3r < \min\{\rinj(M), \rinj(\pa M), r_{\pa}\}$ as above.  A
subset $\{p_\gamma\}_{\gamma \in I}$ is called an \emph{\unifexp
  $r$-covering subset of $M$} if the following conditions are
satisfied:
\begin{enumerate}[(i)]
\item For each $R>0$, there exists $N_R \in \NN$ such that, for each
  $p \in M$, the set $\{\gamma \in I\vert\, \dist(p_\gamma, p) < R\}$
  has at most $N_R$ elements.
 \item For each $\gamma \in I$, we have either $p_\gamma \in \pa M$ or
   $d(p_\gamma, \pa M) \geq r$.
 \item $M \subset \bigcup_{\gamma = 1}^{\infty} W_{\gamma}(r)$.
\end{enumerate}
\end{definition}

We have the following Poincar\'e-type inequality, which allows us to
consider more general Dirichlet boundary conditions.

\begin{theorem}\label{thm.Poinc1}
Let $(M,g)$ be a Riemannian manifold with boundary of bounded
geometry, $E \to M$ be a hermitian vector bundle with a metric
preserving connection, and $A \subset \pa M$ be a measurable
subset. We assume that there exists an \unifexp $r$-covering subset
$\{p_\gamma\}_{\gamma \in I}$ and $S \subset \{\gamma \in I\,
|\ p_\gamma\in \pa M\}$ satisfying the following properties:
\begin{enumerate}[(i)]
\item $\dist(x, S)$ is bounded on $M$;
 
\item there exists $\epsilon > 0$ such that, for any $\gamma \in S$,
  $\vol_{\pa M}(A \cap W_\gamma) \geq \epsilon \vol_{\pa
  M}(W_\gamma)$.
\end{enumerate} 
Then there exists $C_{M, A} > 0$ such that
\begin{equation*}
 \int_{M} |f|^2 \dvol_g \leq C_{M, A} \Big( \int_{M} |\nabla f|^2
 \dvol_g + \int_{A} |f|^2 \dvol_{\pa g} \Big),
\end{equation*}
for any smooth, compactly supported section $f$ of $E$.
\end{theorem}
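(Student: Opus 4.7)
The plan is to globalize the local uniform Poincar\'e inequalities from Corollaries~\ref{cor.uniform} and~\ref{cor.uniform2} via a chaining argument along the uniform cover $\{W_\gamma\}_{\gamma\in I}$. First, since $\nabla^E$ is metric preserving, Kato's inequality $\bigl|\nabla|f|\bigr| \le |\nabla^E f|$ allows me to reduce to real-valued scalar $f$. Using bounded geometry, each $W_\gamma$ is uniformly bi-Lipschitz to a fixed Euclidean ball $\Omega$ (via $\exp^M_{p_\gamma}$ in the interior case) or to a fixed half-ball $\Omega' \times [0,r)$ (via Fermi coordinates $\exp^\perp$ in the boundary case), with constants independent of $\gamma$. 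Transferring Corollaries~\ref{cor.uniform} and~\ref{cor.uniform2} through these charts yields, with uniform $C_0, \epsilon_1 > 0$, two local inequalities:
\begin{equation*}
\int_{W_\gamma} |f|^2 \dvol_g \le C_0\Big( \int_{W_\gamma} |\nabla f|^2 \dvol_g + \int_B |f|^2 \dvol_g \Big)
\end{equation*}
for any $\gamma \in I$ and any $B \subset W_\gamma$ with $\vol_g(B) \ge \epsilon_1 \vol_g(W_\gamma)$, and, using hypothesis~(ii),
\begin{equation*}
\int_{W_\gamma} |f|^2 \dvol_g \le C_0\Big( \int_{W_\gamma} |\nabla f|^2 \dvol_g + \int_{A \cap W_\gamma} |f|^2 \dvol_{\pa g} \Big)
\end{equation*}
for any $\gamma \in S$.

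Next, for each $\gamma_0 \in I$, I would construct a finite chain $\gamma_0, \gamma_1, \ldots, \gamma_N$ in $I$ with $\gamma_N \in S$, $N \le L$ for a fixed $L$, and $\vol_g(W_{\gamma_i} \cap W_{\gamma_{i+1}}) \ge \epsilon_1 \vol_g(W_{\gamma_i})$ at every step. Hypothesis~(i) gives $D \define \sup_{x\in M}\dist(x,S) < \infty$, so one can follow a near-minimizing path of length $\le D+r$ from $p_{\gamma_0}$ to some $p_{\gamma_N}$ with $\gamma_N \in S$, sample it at spacing $< r/2$, and associate to each sample point a covering chart via property~(iii) of Definition~\ref{def.FC}. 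Consecutive centers then satisfy $\dist(p_{\gamma_i}, p_{\gamma_{i+1}}) < 2r$, and bounded geometry gives a uniform lower bound on $\vol_g(W_{\gamma_i} \cap W_{\gamma_{i+1}})/\vol_g(W_{\gamma_i})$, possibly after shrinking $\epsilon_1$; the number of links is bounded by $L = \lceil 2(D+r)/r \rceil$.

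Iterating the first local inequality along the chain with $B = W_{\gamma_i} \cap W_{\gamma_{i+1}}$ (noting $\int_{W_{\gamma_i}\cap W_{\gamma_{i+1}}} |f|^2 \dvol_g \le \int_{W_{\gamma_{i+1}}} |f|^2 \dvol_g$) and closing with the second one at $\gamma_N$ yields a constant $C_1 = C_1(C_0, L)$ such that
\begin{equation*}
\int_{W_{\gamma_0}} |f|^2 \dvol_g \le C_1\Big( \sum_{i=0}^{N} \int_{W_{\gamma_i}} |\nabla f|^2 \dvol_g + \int_{A \cap W_{\gamma_N}} |f|^2 \dvol_{\pa g} \Big).
\end{equation*}
Summing over $\gamma_0 \in I$, the left side dominates $\int_M |f|^2 \dvol_g$ by property~(iii) of the cover. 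On the right, each fixed $W_\gamma$ appears in a chain only if its starting point $\gamma_0$ satisfies $\dist(p_{\gamma_0}, p_\gamma) \le L r$, so by Definition~\ref{def.FC}(i) it appears at most $N_{(L+1)r}$ times; combined with the bounded multiplicity $N_{2r}$ of the cover itself, this gives
\begin{equation*}
\int_M |f|^2 \dvol_g \le C_{M,A}\Big( \int_M |\nabla f|^2 \dvol_g + \int_A |f|^2 \dvol_{\pa g} \Big),
\end{equation*}
with $C_{M,A}$ depending only on $C_0$, $L$, and the multiplicity constants.

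The main obstacle is the chain construction: guaranteeing that consecutive charts always overlap in a set of uniformly bounded-below relative volume is what keeps the iterated constant $C_1$ finite and independent of $\gamma_0$. This relies on the uniform volume comparison of small balls provided by bounded geometry, together with the interplay between hypothesis~(i) and the local finiteness of the $r$-cover from Definition~\ref{def.FC}.
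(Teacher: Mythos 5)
Your overall strategy --- Kato's inequality to reduce to the real scalar case, the local uniform Poincar\'e inequalities of Corollaries~\ref{cor.uniform} and~\ref{cor.uniform2} transferred through geodesic and Fermi charts with uniform constants, a chaining argument toward $S$ of uniformly bounded length, and a final summation controlled by the bounded multiplicity of the cover --- is exactly the one used in the paper, where the chains are encoded by a predecessor map $\pi$ defined on the level sets $S_\ell$ of the combinatorial distance to $S$. There is, however, a genuine gap at precisely the step you single out as the main obstacle. If you sample a path at spacing $<r/2$ and assign to each sample point an arbitrary chart of the \emph{given} $r$-covering set containing it, consecutive centers are only guaranteed to satisfy $\dist(p_{\gamma_i},p_{\gamma_{i+1}})<5r/2$ or so (each sample point may lie at distance up to $r$, and up to roughly $2r$ in the Fermi case, from its chart's center), and, more importantly, two radius-$r$ charts whose centers are at distance close to $2r$ or beyond may intersect in a set of arbitrarily small measure, or not at all. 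Bounded geometry cannot rescue this: the hypothesis only supplies \emph{some} $r$-covering set, whose centers may be mutually almost $2r$ apart, so there is no uniform lower bound on $\vol_g(W_{\gamma_i}\cap W_{\gamma_{i+1}})/\vol_g(W_{\gamma_i})$, and the iterated constant $C_1$ is not controlled.

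The missing idea is a refinement of the cover. The paper first enlarges $\{p_\gamma\}$ to an $r/3$-covering set while \emph{still using radius $r$} to define the sets $W_\gamma$, and builds chains whose consecutive centers are at distance at most $2r/3$; then $W_{\gamma_i}$ and $W_{\gamma_{i+1}}$ both contain the ball of radius $r/3$ about $p_{\gamma_i}$, whose volume is bounded below uniformly by bounded geometry, so the overlap condition needed for Corollary~\ref{cor.uniform} holds with a uniform $\epsilon_1$. The enlargement is harmless because hypotheses (i) and (ii) concern only the original set $S$, which is kept fixed. With this modification your argument --- iterating along chains of uniformly bounded length and summing using the multiplicity bounds from Definition~\ref{def.FC}(i) --- goes through and coincides with the paper's proof.
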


\begin{proof} 
The vector bundle case follows from the scalar case by Kato's
inequality, see the end of the proof. So let us assume in the
beginning that $E$ is the trivial real line bundle and hence that $f$
is a smooth, real-valued, compactly supported function on~$M$.

Let us assume, for the simplicity of notation, that we have a
countable set of indices $\gamma$ for our \unifexp $r$-covering set,
which is equivalent to having a countable basis of topology.  We first
enlarge the given set $\{p_\gamma\}$ to be an \unifexp $r/3$-covering
set (but still use $r$ to define the sets $W_\gamma$; we need that in
order to ensure that two neighboring $W_\gamma$'s will meet in a large
enough set). Let $S_0\define S \subset \NN$. Define, by induction,
$S_{\ell+1}$ to be the set of $\gamma \in \NN \setminus
\bigcup_{j=0}^{\ell} S_{j}$ such that $p_\gamma$ is at distance at
most $2r/3$ to $S_{\ell}$. Then, for $N$ large enough, we have $\NN =
S_0 \cup S_1 \cup \ldots \cup S_N$, since there exists (by assumption)
$R>0$ such that $\dist(x, S) \leq R$, for all $x \in M$.  For each
$\gamma \in S_{\ell +1}$, $\ell \geq 0$, we choose a
\emph{predecessor} $\pi(\gamma) \in S_{\ell}$ such that
$\dist(p_\gamma, p_{\pi(\gamma)}) \leq 2r/3$.

Below, $C >0$ is a constant (close to 1) that yields a comparison of
the volume elements on $M$ and on the coordinate charts
$\kappa_\gamma \define  \kappa_{p_\gamma}$ corresponding to the \unifexp
$r$-covering defined by the \unifexp $r$-covering set
$\{p_\gamma\}_{\gamma \in \NN}$:
\begin{align}\label{eq.FC-chart}
  \left\{\begin{matrix} \kappa_p \colon B^{m-1}_{r}(0) \times [0,r)\to
    M,\hfill & \kappa_p(x, t) \define \exp^{\perp}(\exp_{p}^{\pa
      M}(x), t),\hfill & \text{ if } p \in \pa M\\
   \kappa_p \colon B^{m}_{r}(0) \to M,\hfill & \kappa_p(v) \define
   \exp_p^{M}(v),\hfill & \text{ otherwise.}
% (if } \dist(x, \pa M) \geq r).
\end{matrix}\right.
\end{align}
(So $W_\gamma$ is the image of $\kappa_{p_\gamma}$.) The constant $C$
depends only on $r$ and $M$, but not on $\gamma \in \NN$, since $M$
has bounded geometry and we have chosen $r$ less than the injectivity
radius of $M$. If $\gamma \in S_0\define S$, then
Corollary~\ref{cor.uniform2} gives for $\Omega\define B_{r}^{m-1}(0)
\times [0, r)$ and $B \define \kappa_\gamma^{-1}(A \cap W_\gamma)
  \subset B_{r}^{m-1}(0) \times \{0\}$
\begin{multline}\label{eq.one}
  \int_{W_\gamma} |f|^2 \dvol_g \leq C \int_{\Omega} |f \circ
  \kappa_\gamma|^2 dx \leq CC_{\Omega} \Big ( \int_{\Omega}
  |\nabla^{E} (f \circ \kappa_\gamma)|^2 dx + \int_{B} |f \circ
  \kappa_\gamma|^2 \dvol_{\pa g} \Big ) \\
  \leq C C' C_{\Omega} \Big ( \int_{W_\gamma} |\nabla f|^2 dx +
  \int_{W_\gamma \cap A} |f|^2 \dvol_{\pa g} \Big ).
\end{multline}
Here $\nabla^E$ is the covariant derivative with respect to the
euclidean metric and $C'$ is the constant in the equivalence of the
local $H^1$-norms with respect to the euclidean metric and $g$. Again,
since $(M,g)$ is of bounded geometry, this constant does not depend on
$\gamma$.  On the other hand, by the bounded geometry assumption and
the choice of the $W_\gamma$, if $\gamma \notin S_0$, the sets
$W_\gamma$ and $W_{\beta}$ will intersect in a set of volume (or
measure) $\ge \epsilon$ for some $\epsilon > 0$ independent of
$\gamma$ and $\beta$ if $\dist(p_\gamma, p_\beta) \leq 2r/3$.  Then
using Corollary~\ref{cor.uniform} (for $\Omega\define B_{r}^{m}(0)$
and $B\define \kappa_\gamma^{-1}(W_\gamma\cap W_\beta) \subset
B_{r}^{m}(0)$, and $\beta = \pi(\gamma)$) and similar calculations to
\eqref{eq.one} we obtain
\begin{equation}\label{eq.two}
  \int_{W_\gamma} |f|^2 \dvol_g \leq C \Big ( C_{\Omega}
  \int_{W_\gamma} |\nabla f|^2 \dvol_g + \int_{W_{\pi(\gamma)}} |f|^2
  \dvol_g \Big ).
\end{equation}

Iterating Equation \eqref{eq.two} and using Equation \eqref{eq.one} we
obtain that there exists $C_k > 0$ such that for $\gamma\in S_k$ we
have
\begin{equation}\label{eq.three}
  \int_{W_\gamma} |f|^2 \dvol_g \leq C_k \Big ( \sum_{j=0}^k
  \int_{W_{\pi^j(\gamma)}} |\nabla f|^2 \dvol_g +
  \int_{W_{\pi^k(\gamma)} \cap A} |f|^2 \dvol_{\pa g} \Big).
\end{equation}
(This equation reduces to Equation \eqref{eq.one} if $k = 0$.)  Since
our cover is uniform, there exists $N_0 \in \NN$ such that no point in
$M$ belongs to more than $N_0$ sets of the form~$W_{\gamma}$.  We can
also assume that the $C_0 \leq C_1 \leq \ldots \leq C_N$ (recall that
we stop at $N$). Using these observations and summing up
\eqref{eq.three} over $\gamma$, we obtain
\begin{align*}
  \int_{M} |f|^2 \dvol_g &\leq \sum_{\gamma = 1}^\infty
  \int_{W_\gamma} |f|^2 \dvol_g \\
  & \leq C_N \sum_{\gamma = 1}^\infty \Big ( \sum_{j=0}^k
  \int_{W_{\pi^j(\gamma)}} |\nabla f|^2 \dvol_g +
  \int_{W_{\pi^k(\gamma)} \cap A} |f|^2 \dvol_{\pa g} \Big ) \\
  & \leq (N+1) N_0 C_N \Big ( \int_{M} |\nabla f|^2 \dvol_g + \int_{A}
  |f|^2 \dvol_{\pa g} \Big),
\end{align*}
which is the desired inequality in the scalar case where $C_{M, A} =
(N+1)N_0 C_N$ (note that $k$ depends on $x$, but we have $k \le N$,
which explains the factor $N+1$ in $C_{M,A}$).

For general vector bundles $E$ with metric connection, we have the
Kato inequality $ | \nabla |f|_E| \leq | \nabla f|_E$. Using then the
inequality just proved for $f$ replaced by $|f|$ we have
\begin{align*}
  \int_{M} |f|^2 \dvol_g &\leq C_{M, A} \, \Big( \int_{M} |\nabla |f|
  |^2 \dvol_g + \int_{A} |f|^2 \dvol_{\pa g}\, \Big ) \\
  & \leq C_{M, A} \Big( \int_{M} |\nabla f|^2 \dvol_g + \int_{A} |f|^2
   \dvol_{\pa g} \Big).
\end{align*}
This completes the proof.
\end{proof}

\begin{example}
Let $M = [0, 1] \times \RR$. Then $A = \bigcup_{k \in \ZZ}\, \{0 \}
\times [2k, 2k+1]$ satisfies the assumptions of our theorem, however,
that would not be the case if we replaced $A$ with $\{0\} \times [0,
  \infty)$.
\end{example}

We obtain the following result (proved for $A = \pa M$ in \cite{Sakurai} and,
in general, for $A=\pa_D M$ in \cite{AGN1})

\begin{corollary}\label{cor.Poinc1}
Let $(M,g)$ be a Riemannian manifold with boundary of bounded
geometry, let $A \subset \pa M$ be an open and closed subset such that
$(M, A)$ has finite width. Moreover, let $E \to M$ be a hermitian
vector bundle with a metric preserving connection. Then there exists
$C_{M, A} > 0$ such that
\begin{equation*}
  \int_{M} |f|^2 \dvol_g \leq C_{M, A} \Big( \int_{M} |\nabla f|^2
  \dvol_g + \int_{A} |f|^2 \dvol_{\pa g} \Big),
\end{equation*}
for any smooth, compactly supported section $f$ of $E$.
\end{corollary}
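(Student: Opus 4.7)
The plan is to deduce Corollary~\ref{cor.Poinc1} from Theorem~\ref{thm.Poinc1} by producing a suitable $r$-covering subset $\{p_\gamma\}_{\gamma\in I}$ of $M$ together with an index set $S\subset\{\gamma:p_\gamma\in\pa M\}$ satisfying hypotheses~(i) and~(ii) of that theorem. Fix $r$ smaller than one third of $\min\{\rinj(M),\rinj(\pa M),r_\pa\}$. I would construct the covering in three stages via a standard maximal-separation argument in bounded geometry: first, choose a maximal $r$-separated subset of $A$ with respect to the intrinsic distance on $\pa M$ and call it $\{p_\gamma\}_{\gamma\in S}$; second, extend to a maximal $r$-separated subset of all of $\pa M$; third, adjoin a maximal $r$-separated subset of $\{x\in M:\dist(x,\pa M)\ge r\}$. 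Maximality at each step forces the covering property~(iii) of Definition~\ref{def.FC} (using, for points close to $\pa M$, the Fermi-coordinate description of $W_\gamma$), while the $r$-separation together with bounded geometry gives the uniform local finiteness~(i); the location condition on each $p_\gamma$ in~(ii) of Definition~\ref{def.FC} holds by construction.

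To verify hypothesis~(i) of Theorem~\ref{thm.Poinc1}, I use the finite-width assumption: there is $R>0$ with $\dist(x,A)<R$ for every $x\in M$. Given $x$, pick $y\in A$ with $\dist(x,y)<R$. By the first stage of the construction, $S$ is $r$-dense in $A$ with respect to the intrinsic distance on $\pa M$, so there exists $\gamma\in S$ with $\dist_{\pa M}(y,p_\gamma)<r$. Since every path in $\pa M$ is a path in $M$, $\dist(y,p_\gamma)\le\dist_{\pa M}(y,p_\gamma)<r$, and the triangle inequality yields $\dist(x,S)\le R+r$, uniformly in $x$.

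The heart of the argument is hypothesis~(ii), where the assumption that $A$ is simultaneously open and closed in $\pa M$ enters decisively: it forces $A$ to be a union of connected components of $\pa M$. For $\gamma\in S$ we have $p_\gamma\in A$, and the trace
\begin{equation*}
  W_\gamma\cap\pa M \;=\; \exp^{\pa M}_{p_\gamma}\bigl(B^{T_{p_\gamma}\pa M}_r(0)\bigr)
\end{equation*}
is the diffeomorphic image of a Euclidean ball (since $r<\rinj(\pa M)$), hence a connected subset of $\pa M$ containing $p_\gamma$. Any connected subset of $\pa M$ meeting $A$ is contained in the component of $\pa M$ through the intersection point, which in turn is contained in $A$. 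Therefore $A\cap W_\gamma=W_\gamma\cap\pa M$, and hypothesis~(ii) holds trivially with $\epsilon=1$. Theorem~\ref{thm.Poinc1} then produces the desired inequality. The only mildly delicate point is coordinating the three-stage covering so that Definition~\ref{def.FC} together with~(i)--(ii) of Theorem~\ref{thm.Poinc1} hold simultaneously; exploiting the open-and-closed structure of $A$ collapses what could have been a genuine density question on $A$ into the trivial case $\epsilon=1$.
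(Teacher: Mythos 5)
Your proof is correct and follows essentially the same route as the paper: the paper's proof is a one-line reduction to Theorem~\ref{thm.Poinc1}, taking an arbitrary \unifexp $r$-covering set with $S=\{\gamma \mid p_\gamma\in A\}$, with hypothesis~(i) following from finite width and hypothesis~(ii) holding with $\epsilon=1$ because $A$, being open and closed, is a union of boundary components. Your extra work constructing the covering by maximal separation is a standard detail the paper delegates to \cite{AGN1}, and your verification of (i) and (ii) is exactly the intended one.
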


\begin{proof}
This follows right away from Theorem \ref{thm.Poinc1} by taking any
\unifexp $r$-covering set $\{p_\gamma\}$ and $S = \{\gamma\,
\vert\ p_\gamma \in A\}$.
\end{proof}

We have the following extension of the Poincar\'e inequality

\begin{corollary}
Let us keep the assumptions of Corollary \ref{cor.Poinc1}.  Then
\begin{equation*}
 \int_{M} |f|^2 \dvol_g \leq C_{M, A}^k \, \int_{M} |\nabla^k f|^2
 \dvol_g ,
\end{equation*}
for any $f \in H^k(M; E)$, vanishing of order $k$ at $A$.
\end{corollary}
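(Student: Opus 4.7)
The plan is to iterate Corollary~\ref{cor.Poinc1} applied to the successive covariant derivatives of $f$. For each $j \in \{0, 1, \ldots, k-1\}$, the bundle $E_j \ede E \otimes (T^*M)^{\otimes j}$ inherits a hermitian fibre metric and a metric-preserving connection from $\nabla^E$ and the Levi-Civita connection on $TM$, so Corollary~\ref{cor.Poinc1} applies verbatim to smooth compactly supported sections of each $E_j$. A crucial observation is that the constant $C_{M, A}$ appearing in that corollary does \emph{not} depend on the bundle: the proof of Theorem~\ref{thm.Poinc1} reduces the vector bundle case to the scalar case via Kato's inequality, so $C_{M, A}$ depends only on $(M, g)$ and $A$.

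Since $f$ vanishes to order $k$ along $A$, each $\nabla^{j-1} f$ (for $1 \leq j \leq k$) is a section of $E_{j-1}$ whose boundary trace on $A$ is zero. Applying Corollary~\ref{cor.Poinc1} to $\nabla^{j-1} f$ in the bundle $E_{j-1}$, the boundary integral over $A$ drops out and one obtains
\begin{equation*}
\int_M |\nabla^{j-1} f|^2 \dvol_g \leq C_{M, A} \int_M |\nabla^j f|^2 \dvol_g,
\end{equation*}
for $j = 1, 2, \ldots, k$. Chaining these $k$ inequalities together yields the asserted estimate with constant $C_{M, A}^k$.

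The remaining point is a density argument, since Corollary~\ref{cor.Poinc1} is stated for smooth compactly supported sections whereas here we have $f \in H^k(M; E)$. Interpreting ``vanishing of order $k$ at $A$'' as membership in the closure in $H^k(M; E)$ of the space of smooth compactly supported sections of $E$ whose support avoids $A$ (the natural higher-order analogue of the spaces $W^{k,p}_D$ in Equation~\eqref{eq.def.WmD}), both sides of the inequality are continuous on this space and the bound passes to the limit. The main, and essentially only non-routine, step is the verification that the Poincar\'e constant is bundle-independent; once this is settled, the proof reduces to a straightforward iteration.
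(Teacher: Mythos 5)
Your proposal is correct and follows essentially the same route as the paper: iterate Corollary~\ref{cor.Poinc1} on the successive covariant derivatives (sections of $E\otimes (T^*M)^{\otimes j}$, whose traces on $A$ vanish), and conclude by density of $\CIc(M\setminus A;E)$ and continuity of both sides in the $H^k$-norm. Your explicit verification that the Poincar\'e constant is bundle-independent (via the Kato-inequality reduction to the scalar case) is a detail the paper leaves implicit but is needed to get the constant $C_{M,A}^k$ rather than a product of different constants.
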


\begin{proof}
Both the left hand side and the right hand side are continuous with
respect to the $H^k$-norm. We have that $\CIc(M \setminus A; E)$ is
dense in $\{ f \in H^k(M; E)\, | \ \pa_\nu^j u = 0 \mbox{ on } A, j
\le k-1\} $ (see \cite{AGN1} and the references therein, for
instance). The proof is then obtained by iterating
Corollary~\ref{cor.Poinc1}.
\end{proof}

%%%%%%%%%%%%%%%%%%%%%%%%%%%%%%%%%%%%%%%%%%%%%%%%%%%%%%%%%%%%%%%%%%%%%%%%%%%%%%%%%%%%%%%%%%

\section{Well-posedness}
\label{sec.five}

We now prove our well-posedness results, under the assumption that $P$
satisfies the strong Legendre condition, that $(M, \pa_D M \cup \pa_R
M)$ has finite width, and that $E \to M$ has totally bounded curvature
(in which case, we recall, $E$ is said to have bounded geometry). See
Subsection \ref{ssec.ae} for an extension of our results to the case
when we have a decomposition $E \vert_{\pa M} = E_D \oplus E_R \oplus
E_N$ of the vector bundle $E\vert_{\pa M}$, instead of a decomposition
of the boundary $\pa M$.

Recall that, by the definition of finite width, our assumption that
$(M, \pa_D M \cup \pa_R M)$ has finite width implies, in particular,
that $M$ is of bounded geometry. Also, recall that we assume that all
our differential operators have bounded coefficients.

\subsection{Coercivity} In order to study the invertibility of operators 
like $\tilde P$, one often uses ``strong coercivity.'' An easy way to
obtain strongly coercive operators is to combine the ``strong Legendre
condition'' with the Poincar\'e inequality. See, however, Subsection
\ref{ssec.ae} for a discussion of uniformly strongly elliptic
operators and of the G\aa rding's inequality. We now recall the needed
concepts, using the terminology of~\cite{AgmonCoercive, ChenWu}.  See
also \cite{gesztesyMitrea2009, gesztesyMitrea2014a,
  McLeanBook,Taylor1}.

\begin{definition} \label{def.SLC}
Let $a$ be a bounded, measurable sesquilinear form on $T^*M \otimes
E$. We say that $a$ satisfies the \emph{strong Legendre condition} if
there exists $\gamma_a>0$ such that
\begin{align}\label{eq.u.s.e}
  \Re\, a(\zeta , \zeta ) \, \geq\, \gamma_a |\zeta|^{2} , \text{ for
    all } \zeta \in T^{*}M \otimes E\,.
\end{align}
\end{definition}

Note that this is a condition at every $T_x^*M \otimes E_x$ and that
it is \emph{uniform} in $x$. It would be more apropriate then to say
that $a$ satisfies the \emph{uniform strong Legendre condition}. For
simplicity, we have chosen not to do that. However, in agreement with
the standard terminology, we use the terminology \emph{uniformly strongly
elliptic} for operators that are strongly elliptic with uniform
constants. We can now introduce the operators in which we are
interested.

\begin{definition} \label{def.u.s.e}
Let $\tilde P = \tilde P_{(a, b)} + \tilde Q + \tilde Q_1^*$ be a
second order (linear) differential operator in divergence form on the
vector bundle $E \to M$ (Definition \ref{def.divergence}), with $Q$
and $Q_1$ first order differential operators (as usual). We shall say
that $\tilde P$ (or $P$) satisfies the \emph{strong Legendre
  condition} if $a$ does. (Recall that it is a standing assumption
that $\tilde P$ has bounded coefficients.)
\end{definition}

Thus $P$ satisfies the strong Legendre condition if, and only if,
$P_{(a, b)}$ does. Moreover, if $P$ satisfies the strong Legendre
condition, then it is uniformly strongly elliptic.  One of our results
next amounts to the fact that, if the Poincar\'e inequality is
satisfied, if $P = P_{(a, b)}$ satisfies the strong Legendre
condition, if $\Re\, b \geq \epsilon$, $\epsilon > 0$ on $\pa_R
  M$ and $\Re\, b \geq 0$ on $\pa M$, and if condition (iii) of
Theorem~\ref{thm.main} is fulfilled, then $P$ will also be ``strongly
coercive,'' a concept that we now recall.

\begin{definition}\label{def.s.coercive}
Let $V$ be a Hilbert space and let $S \colon V \to V^{*}$ be a bounded
operator. We say that $S$ is \emph{strongly coercive} (on $V$) if
there exists $\gamma > 0$ such that
\begin{equation*}
 \Re\, \<Su, u\> \geq\gamma \|u\|_V^2.
\end{equation*}
\end{definition}

In other words, the smooth family $(a_x)_{x\in M}$ of sesquilinear
forms $a_x\colon T_x^*M\otimes E \times T_x^*M\otimes E\to \CC$
satisfies the strong Legendre condition if, and only if, it is
\emph{uniformly strongly coercive}.

\begin{lemma}\label{lemma.eq.norms}
Let us assume that $(M, \pa_D M \sqcup \pa_R M)$ has finite width.  Then
the semi-norm
\begin{equation*} 
  ||| u |||^2 \define
  \|\nabla u\|_{L^2(M; E)}^2 + \int_{\pa_R M} |u|_E^2 \dvol_{\pa g}
\end{equation*}
is a norm on $H^{1}_{D}(M; E)$ that is equivalent to the $H^1$-norm.
\end{lemma}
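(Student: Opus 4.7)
The plan is to obtain both inequalities in the equivalence $|||u||| \sim \|u\|_{H^1}$ from already-established results, with the Poincaré inequality doing the main work in one direction.

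\textbf{The nontrivial direction ($\|u\|_{H^1} \lesssim |||u|||$).} I would invoke Corollary~\ref{cor.Poinc1} with $A := \pa_D M \sqcup \pa_R M$. This $A$ is an open and closed subset of $\pa M$ because each of $\pa_D M$ and $\pa_R M$ is, and the finite width assumption on $(M, \pa_D M \sqcup \pa_R M)$ is exactly the hypothesis of that corollary. For $f \in C_c^\infty(M \setminus \pa_D M; E)$ the trace on $\pa_D M$ vanishes, so only the integral over $\pa_R M$ survives on the right-hand side, yielding
\begin{equation*}
  \int_M |f|^2 \dvol_g \ \leq \ C_{M,A}\Big(\int_M |\nabla f|^2 \dvol_g + \int_{\pa_R M} |f|_E^2 \dvol_{\pa g}\Big) \ = \ C_{M,A}\,|||f|||^2.
\end{equation*}
Adding $\|\nabla f\|_{L^2}^2 \le |||f|||^2$ gives $\|f\|_{H^1}^2 \le (C_{M,A}+1)|||f|||^2$ for all such $f$. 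Since $H^1_D(M;E)$ is by definition the $H^1$-closure of $C_c^\infty(M \setminus \pa_D M; E)$, the inequality extends by continuity to all $u \in H^1_D(M; E)$; continuity of the boundary term is immediate for approximants (whose traces are classical restrictions), so no trace theorem is needed for this direction.

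\textbf{The easy direction ($|||u||| \lesssim \|u\|_{H^1}$).} Plainly $\|\nabla u\|_{L^2} \le \|u\|_{H^1}$. For the boundary contribution, I would invoke the trace theorem on manifolds with boundary and bounded geometry (as in \cite{GrosseSchneider2013}, already cited in Section~\ref{sec.preliminaries}), which furnishes a bounded trace $H^1(M; E) \to H^{1/2}(\pa M; E) \hookrightarrow L^2(\pa M; E)$. Restricting to $\pa_R M \subset \pa M$ then yields
\begin{equation*}
  \int_{\pa_R M} |u|_E^2 \dvol_{\pa g} \ \le \ C'\,\|u\|_{H^1(M;E)}^2,
\end{equation*}
and combining gives $|||u|||^2 \le (1+C')\|u\|_{H^1}^2$.

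\textbf{Definiteness.} To confirm $|||\cdot|||$ is actually a norm (not merely a seminorm) on $H^1_D(M;E)$, note that the inequality $\|u\|_{L^2}^2 \le C_{M,A}|||u|||^2$ obtained above already forces $u = 0$ whenever $|||u|||=0$.

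\textbf{Expected obstacle.} There is no deep obstacle: the two genuinely substantive ingredients (the generalized Poincaré inequality and the trace theorem on bounded geometry) are both already available in the excerpt. The only point requiring some care is the density/continuity step used to transfer Corollary~\ref{cor.Poinc1} from $C_c^\infty(M \setminus \pa_D M; E)$ to all of $H^1_D(M; E)$, since the boundary integral over $\pa_R M$ must remain well-defined and continuous along the approximating sequence; here the trace theorem guarantees that the right-hand side depends continuously on $u$ in the $H^1$-norm, so the limit is legitimate.
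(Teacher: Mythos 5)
Your proposal is correct and follows essentially the same route as the paper's proof: the trace theorem of \cite{GrosseSchneider2013} for the bound $|||u||| \le c\|u\|_{H^1}$, and Corollary~\ref{cor.Poinc1} with $A = \pa_D M \sqcup \pa_R M$ applied to sections vanishing on $\pa_D M$ for the reverse inequality. The extra care you take with the density step and definiteness is a sound elaboration of what the paper leaves implicit.
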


\begin{proof}
Using the trace theorem \cite{GrosseSchneider2013}, there is $c > 0$
such that $|||u||| \le c \|u\|_{H^1}$.  The reverse inequality is
obtained as follows: Let $c_2$ be the best constant in the Poincar\'e
inequality of Corollary~\ref{cor.Poinc1} for $A = \pa_D M\cup \pa_R M$
and sections \emph{vanishing on $\pa_D M$.} Then $\|u\|_{H^{1}}^2 \leq
(1 + c_2) |||u|||^2$.
\end{proof}

The strong Legendre condition and Poincar\'e's inequality combine to
yield \emph{strong} coercivity:

\begin{proposition}\label{prop.coercive} 
Let $P = P_{(a, b)}$ be a second order (linear) differential operator
in divergence form on the vector bundle $E \to M$ (see Definition
\ref{def.divergence}). Assume that $(M, \pa_D M \sqcup \pa_R M)$ has
finite width, that $P$ satisfies the strong Legendre condition, that
$\Re\, b \define \frac12 (b + b^*) \ge 0$ on $\pa M$ (as operators),
and that there exists $\epsilon > 0$ such that $\Re\, b \geq \epsilon$
on $\pa_R M$, then $P$ is strongly coercive on $H^1_D(M; E)$. (So $Q =
Q_1 = 0$ in this result.)
\end{proposition}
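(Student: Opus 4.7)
The plan is to verify strong coercivity directly by computing $\Re\,\langle Pu, u\rangle$, splitting it into an interior term handled by the strong Legendre condition and a boundary term handled by the hypotheses on $b$, and then converting the resulting lower bound on the seminorm $|||\cdot|||$ into a lower bound on $\|\cdot\|_{H^1}$ via Lemma~\ref{lemma.eq.norms}.

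First I would unfold the definition of $P = P_{(a,b)}$ using~\eqref{eq.def.Pab}: for $u \in H^1_D(M;E)$,
\begin{equation*}
  \langle Pu, u\rangle \seq \int_M a(\nabla u,\nabla u)\,\dvol_g
   + \int_{\pa M\smallsetminus \pa_D M} (bu,u)_E\,\dvol_{\pa g}.
\end{equation*}
Taking real parts, the interior term is bounded below by the strong Legendre condition, which gives pointwise $\Re\,a(\nabla u,\nabla u)\ge \gamma_a|\nabla u|^2$, hence
\begin{equation*}
  \Re \int_M a(\nabla u,\nabla u)\,\dvol_g \;\ge\; \gamma_a\,\|\nabla u\|_{L^2(M;E)}^{2}.
\end{equation*}

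For the boundary term, I would note that on the boundaryless manifold $\pa M$ the symbol of $b_1+b_1^*$ is zero (integration by parts on a closed manifold annihilates the principal symbol of a first-order operator plus its formal adjoint), so $\Re\,b = \tfrac12(b+b^*)$ is in fact a bounded measurable endomorphism of $E|_{\pa M}$. Therefore
\begin{equation*}
  \Re \int_{\pa M\smallsetminus \pa_D M}(bu,u)_E\,\dvol_{\pa g}
  \seq \int_{\pa M\smallsetminus \pa_D M}(\Re\,b\cdot u,u)_E\,\dvol_{\pa g},
\end{equation*}
and by the hypotheses $\Re\,b\ge 0$ on $\pa M$ and $\Re\,b\ge \epsilon$ on $\pa_R M$ this is bounded below by $\epsilon\int_{\pa_R M}|u|_E^{2}\,\dvol_{\pa g}$.

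Combining the two estimates yields
\begin{equation*}
  \Re\,\langle Pu, u\rangle \;\ge\; \min(\gamma_a,\epsilon)\,\Big(\|\nabla u\|_{L^2(M;E)}^{2} + \int_{\pa_R M}|u|_E^{2}\,\dvol_{\pa g}\Big) \seq \min(\gamma_a,\epsilon)\,|||u|||^{2},
\end{equation*}
and then Lemma~\ref{lemma.eq.norms} (which uses the finite width assumption on $(M,\pa_D M\sqcup \pa_R M)$) provides a constant $c>0$ with $|||u|||^{2}\ge c\,\|u\|_{H^1}^{2}$, giving the desired $\Re\,\langle Pu,u\rangle \ge \gamma\|u\|_{H^1}^2$ with $\gamma \define c\min(\gamma_a,\epsilon)$.

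The only non-bookkeeping step is the observation that $\Re\,b$ is an endomorphism, so that the pointwise inequalities on $b$ make sense; apart from that, the proof is a direct assembly of the strong Legendre condition, the Robin-positivity hypothesis, and the Poincar\'e-type equivalence of norms from Lemma~\ref{lemma.eq.norms}, the latter being where the finite width assumption actually enters.
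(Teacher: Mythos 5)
Your proof follows essentially the same route as the paper's: split $\Re\,\langle \tilde P_{(a,b)}u,u\rangle$ into the interior term (bounded below by $\gamma_a\|\nabla u\|^2$ via the strong Legendre condition) and the boundary term (bounded below by $\epsilon\int_{\pa_R M}|u|^2$ via the positivity hypotheses on $b$), then pass from $|||u|||^2$ to $\|u\|_{H^1}^2$ by Lemma~\ref{lemma.eq.norms}. One correction, though: your justification that $\Re\,b$ is an endomorphism is wrong. It is \emph{not} true that the principal symbol of $b_1+b_1^*$ vanishes on a closed manifold for an arbitrary first-order $b_1$ (take $b_1=i\,d/d\theta$ on $S^1$, which is formally self-adjoint, so $b_1+b_1^*=2i\,d/d\theta$ is genuinely first order); what does force $\Re\,b$ to have order zero is the \emph{semiboundedness} hypothesis $\Re\,b\ge 0$, since a formally self-adjoint first-order operator that is bounded below must have vanishing principal symbol (this is exactly the remark the paper makes before Theorem~\ref{thm.regularity}). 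Fortunately this step is dispensable for the coercivity estimate: the hypotheses $\Re\,b\ge 0$ and $\Re\,b\ge\epsilon$ on $\pa_R M$ are operator inequalities, which yield $\Re\int_{\pa M\smallsetminus\pa_D M}(bu,u)\,\dvol_{\pa g}\ge\epsilon\int_{\pa_R M}|u|^2\,\dvol_{\pa g}$ directly, without any need to interpret $\Re\,b$ pointwise.
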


\begin{proof}
The definition of $\tilde P_{(a, b)}$, Equation
\eqref{eq.def.Pab}, gives for all $u\in H^1_D(M;E)$ that
\begin{multline*}
  \Re\,  (\tilde P_{(a, b)} u)(u)= \int_M \Re\,  a(\nabla u, \nabla u)
  \dvol_g + \int_{\pa M \smallsetminus \pa_D M}  \Re\,  (bu, u) \dvol_{\pa g} \\
  \geq \, \gamma_a \|\nabla u\|^2 + \epsilon \int_{\pa_R M} |u|^2
  \dvol_{\pa g} \, \geq \, \min \{ \gamma_a, \epsilon \} |||u|||^2
  \, \geq \, \frac{ \min \{ \gamma_a, \epsilon \} }{1 + c_2} \,
  \gamma_a \| u \|_{H^1}^2,
\end{multline*}
where the last step is by Lemma~\ref{lemma.eq.norms}. The proof is
complete.
\end{proof}

The relation $\Re b \define \frac12(b+b^*) \geq \epsilon$, as operators,
means, as customary, that
\begin{equation*}
  \Re (b\zeta, \zeta) = (\Re b\zeta, \zeta) \geq \epsilon \Vert
  \zeta\Vert_{L^2}^2,
\end{equation*}
for all $\zeta\in H^1(\pa_R M ;E)$. 

We are interested in strongly coercive operators in view of the
Lax-Milgram Lemma (see, for example, \cite[Section
  5.8]{TrudingerBook}).

\begin{lemma}[Lax--Milgram lemma]\label{lemma.LaxMilgram}
Let $S \colon V \to V^{*}$ be a strongly coercive map with $\Re \<Su,
u\> \geq\gamma \|u\|_V^2$. Then $S$ is invertible and $\|S^{-1}\| \leq
\gamma^{-1}$.
\end{lemma}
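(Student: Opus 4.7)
The plan is to reduce the problem to an operator acting on the Hilbert space $V$ itself via the Riesz representation theorem, and then use the coercivity estimate to verify bijectivity by showing the range is closed and dense.

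First, I would use the Riesz representation theorem (in the conjugate-dual version, as the paper works with $V^*$ the complex conjugate dual) to produce, for each $u\in V$, a unique element $Tu\in V$ satisfying $\langle Su, v\rangle = (Tu, v)_V$ for all $v\in V$. This defines a bounded linear map $T\colon V\to V$ with $\|Tu\|_V = \|Su\|_{V^*}$, so that $S$ is invertible with a given norm bound on the inverse if and only if $T$ is.

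Next, I would translate the coercivity hypothesis to $T$: we have $\Re(Tu,u)_V \geq \gamma \|u\|_V^2$, and the Cauchy--Schwarz inequality then yields
\begin{equation*}
\gamma \|u\|_V^2 \,\leq\, \Re(Tu,u)_V \,\leq\, \|Tu\|_V \,\|u\|_V,
\end{equation*}
hence $\|Tu\|_V \geq \gamma \|u\|_V$. This single estimate simultaneously shows that $T$ is injective, that its range is closed (any Cauchy sequence $Tu_n$ in the range forces $u_n$ to be Cauchy and pass to the limit), and that any candidate inverse would satisfy $\|T^{-1}\|\leq \gamma^{-1}$.

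It remains to prove surjectivity, which reduces to showing $\operatorname{range}(T)$ is dense. If $w\in V$ were orthogonal to $\operatorname{range}(T)$, then in particular $(Tw,w)_V = 0$, so $\gamma\|w\|_V^2 \leq \Re(Tw,w)_V = 0$, forcing $w=0$; combined with closedness of the range this gives $\operatorname{range}(T) = V$. Unwinding the Riesz identification, $S\colon V \to V^*$ is bijective with $\|S^{-1}\|\leq \gamma^{-1}$. The only mildly delicate point is keeping track of conjugate-linearity in the Riesz identification, but since the coercivity hypothesis involves only $\Re\langle Su,u\rangle$, this bookkeeping causes no real difficulty.
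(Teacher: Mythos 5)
Your proof is correct and complete: the Riesz-representation reduction to an operator $T\colon V\to V$, the lower bound $\|Tu\|_V\ge\gamma\|u\|_V$ giving injectivity, closed range and the inverse bound, and the orthogonality argument for density are exactly the standard Lax--Milgram argument. The paper itself offers no proof of this lemma, only a citation to the literature, and your argument matches the classical proof that citation points to; your remark about conjugate-linearity is handled correctly, since the Riesz identification with the \emph{conjugate} dual is linear and isometric and the hypothesis only involves $\Re\<Su,u\>$.
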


Combining the above results (Proposition~\ref{prop.coercive} and the
Lax-Milgram Lemma \ref{lemma.LaxMilgram}), we immediately obtain the
following theorem which is the analog result of Theorem~\ref{thm.main}
for $k=0$.

The theorem uses the definitions of $P$ and $\tilde P$ explained in
Definition~\ref{def.divergence}.  Recall that $\tilde P_{(a, b)}$ is
defined by the sesquilinear form $a$, by the first order differential
operator $b$ acting on $E_{\pa_R M}$, by the first order differential
operators $Q$ and $Q_1$, and, finally, by the relation $\tilde P =
\tilde P_{(a, b)} + \tilde Q + \tilde Q_1^*$. All operators are
assumed to have bounded coefficients.  Moreover, $P_{(a, b)}$ is the
associated second order operator obtained by partial integration from
$\tilde P_{(a, b)}$ ignoring boundary terms, that is, $P = P_{(a, b)}
+ Q + Q_1^* $.

\begin{theorem}\label{thm.H1} 
Let $(M, g)$ be a Riemannian manifold with boundary. Assume that:
\begin{enumerate}[(i)]
 \item $(M, \pa_D M \sqcup \pa_R M)$ has finite width.

 \item $P = P_{(a,  b)} + Qu + Q_1^*$ satisfies the strong Legendre
   condition and has bounded coefficients, as usual;

 \item $\Re \, b \ge 0$ and there is $\epsilon >0$ such that $\Re b
   \geq \epsilon$ on $\pa_R M$.
   
 \item \label{Qcond} there is $\delta = \delta(a, b, g) \geq 0$ small
   enough such that $\Re (Q + Q_1) \geq - \delta$.

\end{enumerate}
Then $\tilde P \colon H^1_{D}(M; E) \to H^1_{D}(M; E)^{*}$ is an
isomorphism.
\end{theorem}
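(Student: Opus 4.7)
The plan is to show that $\tilde P$ is strongly coercive on $H^1_D(M;E)$ in the sense of Definition~\ref{def.s.coercive}, and then to apply the Lax--Milgram Lemma (Lemma~\ref{lemma.LaxMilgram}). Since the hard geometric work -- namely, strong coercivity of the divergence-form part $\tilde P_{(a,b)}$ -- is already available from Proposition~\ref{prop.coercive} under hypotheses (i), (ii), (iii), it remains only to treat $\tilde Q + \tilde Q_1^*$ as a lower-order perturbation controlled by (iv), and then to sum the two estimates.

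First, I would apply Proposition~\ref{prop.coercive} to obtain a constant $\gamma_0 > 0$ (depending only on $\gamma_a$, $\epsilon$, and the Poincar\'e constant $c_2$ of Corollary~\ref{cor.Poinc1} arising through Lemma~\ref{lemma.eq.norms}) such that
\begin{equation*}
  \Re\, \< \tilde P_{(a,b)} u, u\> \ \geq \ \gamma_0 \, \|u\|_{H^1(M;E)}^2 \qquad \text{for all } u \in H^1_D(M;E).
\end{equation*}
This step uses the finite-width hypothesis (i) only through Lemma~\ref{lemma.eq.norms} to transfer coercivity on $||| \cdot |||$ to coercivity on the full $H^1$-norm.

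Next, I would control the lower-order terms. For $u \in H^1_D(M;E)$ we have $\< \tilde Q_1^* u, u\> = \overline{\< \tilde Q_1 u, u\>}$, hence
\begin{equation*}
  \Re\, \<(\tilde Q + \tilde Q_1^*) u, u\> \ = \ \Re\, (Qu,u)_{L^2} + \Re\, (Q_1 u, u)_{L^2} \ = \ \Re\, \bigl(\tfrac{1}{2}(Q + Q^* + Q_1 + Q_1^*) u, u\bigr)_{L^2}.
\end{equation*}
By the convention $\Re\, T := \tfrac{1}{2}(T+T^*)$ recalled in the excerpt, this equals the quadratic form associated to $\Re(Q + Q_1)$ (equivalently $\Re(Q + Q_1^*)$), so by hypothesis (iv),
\begin{equation*}
  \Re\, \<(\tilde Q + \tilde Q_1^*) u, u\> \ \geq \ -\delta \, \|u\|_{L^2}^2 \ \geq \ -\delta \, \|u\|_{H^1}^2 .
\end{equation*}

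Summing the two estimates yields $\Re\, \<\tilde P u, u\> \geq (\gamma_0 - \delta)\|u\|_{H^1}^2$. Choosing $\delta < \gamma_0$ -- which is precisely what is meant by ``$\delta$ small enough'' in (iv), with the allowed size depending on $a$, $b$, and $(M,g)$ through $\gamma_0$ -- gives strong coercivity of $\tilde P$ with constant $\gamma_0 - \delta > 0$. The Lax--Milgram Lemma then produces a bounded inverse $\tilde P^{-1}\colon H^1_D(M;E)^* \to H^1_D(M;E)$, completing the proof.

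\textbf{Main obstacle.} There is no substantial obstacle: the analytic content is concentrated in Proposition~\ref{prop.coercive}, which itself rests on the Poincar\'e inequality of Section~\ref{sec.poincare}. The only point requiring care is interpreting (iv) correctly as a quadratic-form bound on $H^1_D(M;E)$ (or equivalently on $L^2(M;E)$, since $Q,Q_1$ have bounded coefficients), and verifying that the symmetrization identity above reduces the two formulations of (iv) in Theorems~\ref{thm.main} and \ref{thm.H1} to the same condition.
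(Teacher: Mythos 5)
Your proposal is correct and follows exactly the paper's (very terse) proof: the paper simply states that Theorem~\ref{thm.H1} is obtained by combining Proposition~\ref{prop.coercive} with the Lax--Milgram Lemma~\ref{lemma.LaxMilgram}, absorbing $\tilde Q + \tilde Q_1^*$ via hypothesis (iv), which is precisely your argument. One small caveat: your intermediate bound $\Re\,\<(\tilde Q + \tilde Q_1^*)u,u\> \geq -\delta\|u\|_{L^2}^2$ and the remark that (iv) is ``equivalently'' an $L^2$ quadratic-form bound overstate the hypothesis --- for a genuinely first-order $Q$ the form $(Qu,u)$ is controlled by $\|u\|_{H^1}\|u\|_{L^2}$ but not by $\|u\|_{L^2}^2$, and the paper explicitly interprets (iv) as the bound $\Re((Q+Q_1)\xi,\xi) \geq -\delta\|\xi\|_{H^1}^2$ --- yet your final summation uses only this $H^1$ version, so the argument is unaffected.
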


Note that $\Re(Q + Q_1^*) = \Re(Q + Q_1)$. In particular, the
condition $\Re (Q + Q_1) \geq - \delta$ means that
\begin{equation*}
  \Re ((Q + Q_1) \xi, \xi ) = \Re \big ( ( Q \xi, \xi ) + ( \xi
  , Q_1 \xi ) \big ) \geq - \delta \| \xi\|_{H^1}^2,
\end{equation*}
for all $\xi \in H^1_D(M; E)$.

\subsection{Higher regularity}\label{ssec.four}

We continue to assume that $M$ is a smooth manifold with smooth
boundary and bounded geometry. In this section, we record what is one
of our main applications of the Poincar\'e inequality, that is, the
well-posedness of the mixed Dirichlet-Robin problem on manifolds with
finite width in \emph{higher} Sobolev spaces. Even the particular case
of the Poisson problem with Neumann or Dirichlet boundary conditions
is new in the setting of manifolds with bounded geometry. These
results extend the well-posedness result in energy spaces of the
previous subsection to higher regularity Sobolev spaces. They follow
by combining the well-posedness in energy spaces with the regularity
results in \cite{GN17}.

To this end, we assume that $P$ has coefficients in $W^{k, \infty}$,
for some fixed $k \geq 1$.  We also continue to assume that $\tilde P
= \tilde P_{(a, b)} + \tilde Q + \tilde Q_1^*$ (again with $P$ and
$\tilde P$ defined as in Definition~\ref{def.divergence}) satisfies
the strong Legendre condition and $\Re\, b$ is strictly positive on
$\pa_R$ and nonnegative everywere. We have seen then that $\tilde P_{(a,
  b)}$ is strongly coercive.

Let us define
\begin{equation}\label{eq.def.jk}
 j_k \colon H^{k-1}(M; E) \oplus H^{k-1/2}(M \setminus \pa_D M; E) \to
 H^1_D(M; E)^*
\end{equation}
by $j_k(f, g)(w) \define \int_M (f, w) \dvol_g + 
  \int_{\partial M \smallsetminus \pa_D M}  (g, w) \dvol_{\pa g}$,
if $k \geq 1$, $j_0 = id$, if $k = 0$. Note, however, that, for $k =
0$, we have an exact sequence
\begin{equation*}
    0 \to H^{-1/2}(M \setminus \pa_D M; E) \to H^1_D(M; E)^* \to
    H^{-1}(M; E) \to 0,
\end{equation*}
which explains our notation. If $\tilde Pu = j_k(f, g)$, we shall
write $\pa_\nu^P u + bu = g$ and $Pu = f$.  This explains the
difference between $\tilde P$ and $P$. See \cite{GN17} for more
details.

The following result was proved in \cite[Corollary 7.5]{GN17}, using
that the Neumann and Robin problems satisfy regularity. See also
\cite{ADN1, ADN2, McLeanBook, Nirenberg55}.

\begin{theorem}\label{thm.regularity} \cite{GN17}
Assume that the operator $P = P_{(a, b)} + Q + Q_1$ satisfies the
strong Legendre condition, that it has coefficients in $W^{k,
  \infty}$, $k \geq 1$, and $\Re\, b$ is an order zero
  operator. Then there exists $c > 0$ such that
\begin{multline*}
   \| u \|_{H^{k+1}(M; E)} \, \leq c \ \big (\, \|P u\|_{H^{k-1}(M;
     E)} + \| u \|_{H^{1}(M; E)}\\ + \| u |_{\partialD M}
   \|_{H^{k+1/2}(\partialD M; E)} + \| \pa_\nu^P u + bu \|_{H^{k -
       1/2}(M \setminus \pa_D M; E)}\, \big) \ ,
\end{multline*}
for any $u \in H^{1}(M; E)$ such that $\tilde P u \in j_k(H^{k-1}(M;
E) \oplus H^{k-1/2}(\pa M\setminus \pa_D M; E))$. For $k = 0$ the
statement is trivial (once suitably reformulated). 
\end{theorem}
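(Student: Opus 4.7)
The theorem is attributed to Corollary~7.5 of \cite{GN17}, so the direct route is to appeal to that reference. To sketch an independent proof, I would proceed by induction on $k \ge 1$, localizing the estimate via the bounded-geometry covering machinery of Section~\ref{sec.poincare} and reducing to the classical local $H^{k+1}$-regularity for second-order elliptic systems on balls and half-balls.

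\textbf{Localization.} First I would fix an $r$-covering subset $\{p_\gamma\}_{\gamma \in I}$ of $M$ with $r < \tfrac{1}{3}\min\{\rinj(M),\rinj(\pa M), r_\pa\}$, together with a subordinate partition of unity $\{\chi_\gamma\}$ whose $W^{k+1,\infty}$-norms are uniformly bounded (available from the bounded geometry of $M$). Because $\pa_D M$ and $\pa M\setminus \pa_D M$ are each open \emph{and} closed in $\pa M$, and $r$ is smaller than the relevant injectivity radius, every boundary chart $W_\gamma$ meets at most one of these two pieces. Hence each $\chi_\gamma u$ satisfies, on the model domain $\kappa_\gamma^{-1}(W_\gamma)$ (an open ball in $\RR^m$ or the half-ball $B_r^{m-1}(0)\times[0,r)$), a \emph{pure} boundary value problem of one of three types: interior, pure Dirichlet, or pure Robin.

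\textbf{Local regularity and assembly.} On each local model, I would apply the classical a~priori estimate of Agmon--Douglis--Nirenberg / Nirenberg type (see \cite{ADN1,ADN2,McLeanBook,Nirenberg55}): such an $H^{k+1}$-estimate is available for the strongly Legendre-elliptic operator $P$ transported to the chart together with either the Dirichlet condition or the Robin condition $\pa^P_\nu + b$. The strong Legendre condition yields uniform coercivity of the principal part in any chart, while $b$ of order zero (together with $\Re\,b$ of order zero) and bounded makes the complementing Shapiro--Lopatinskii condition hold \emph{uniformly} in $\gamma$. Writing $P(\chi_\gamma u) = \chi_\gamma P u + [P,\chi_\gamma] u$ with the commutator of order one, I would bound $\|\chi_\gamma u\|_{H^{k+1}}$ by the four quantities $\|\chi_\gamma Pu\|_{H^{k-1}}$, $\|u\|_{H^k(W_\gamma)}$, $\|\chi_\gamma u|_{\pa_D M}\|_{H^{k+1/2}}$, and $\|\chi_\gamma(\pa_\nu^P u + bu)\|_{H^{k-1/2}}$, with a constant independent of $\gamma$. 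Summing over $\gamma$, using the uniform finite multiplicity of the cover, produces the inequality of the theorem with the $H^1$-norm on the right replaced by an $H^k$-norm. The standard interpolation bound $\|u\|_{H^k} \le \eta\|u\|_{H^{k+1}} + C_\eta\|u\|_{H^1}$ then allows one to absorb the $H^{k+1}$ term into the left-hand side (equivalently, induct on~$k$).

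\textbf{Main obstacle.} The delicate point is the \emph{uniformity} of the local constants over an infinite family of charts. Bounded geometry provides uniform control of the metric, its inverse, and all Christoffel symbols, hence of the coefficients of $P$ expressed in (Fermi) exponential coordinates, so that a single constant $c$ does the job. The genuinely technical input is the uniform verification of the Shapiro--Lopatinskii condition for the Dirichlet and Robin problems in the systems setting: here the strong Legendre condition, rather than mere uniform strong ellipticity, is exactly what is needed, together with a careful analysis of how the Robin term $b$ enters the conormal boundary operator $\pa_\nu^P + b$. This is precisely the content packaged in Corollary~7.5 of \cite{GN17}.
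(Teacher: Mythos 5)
Your proposal is correct and matches the paper's treatment: the paper itself gives no independent argument for this theorem but simply invokes \cite[Corollary 7.5]{GN17}, whose proof is exactly the localization-over-a-uniform-cover plus uniform Shapiro--Lopatinskii/coercivity argument you sketch. Your outline of the standard route (uniform charts from bounded geometry, local ADN-type estimates, commutator and interpolation absorption) is consistent with the method the paper attributes to the reference, so nothing further is needed here.
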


The meaning of this result is also that, if $u \in H^{1}(M; E)$,
$u|_{\partialD M} \in H^{k+1/2}(\pa_D M; E)$, and $\tilde Pu \in
\operatorname{Im}(j_k) = j_k(H^{k-1}(M; E) \oplus H^{k-1/2}(M
\setminus \pa_D M; E))$ with $\pa_\nu^P u + bu \in H^{k-1/2}(M
\setminus \pa_D M; E)$, then, in fact, $u \in H^{k+1}(M; E)$.

To prove Theorem \ref{thm.main}, we first notice that the
  assumption that $\Re\, b \ge 0$ implies that $\Re b  \define  \frac12 (b +
  b^*)$ is of order zero, since $b$ is of order (at most) one.
Theorem \ref{thm.main} is therefore a consequence of Theorems
\ref{thm.H1} and \ref{thm.regularity}.

\subsection{Applications and extensions}\label{ssec.ae}

We include now some consequences and extensions of our main result,
Theorem \ref{thm.main}. For simplicity, we assume here that our
  differential operators have totally bounded coefficients.

\subsubsection{Splitting of $E$} \label{sssec.splitting}
Let us assume that we are given a splitting
\begin{equation}\label{splitting.E}
  E\vert_{\pa M} = E_D \oplus E_R \oplus E_N
\end{equation}
as a direct sum of three smooth vector bundles with bounded geometry.
We denote by $p_D, p_R, p_N$ the associated orthogonal projections $E
\to E_D, E_R, E_N$.  We then replace the space $H^1_D(M; E)$ with
\begin{equation}\label{eq.dev.V}
  V \define \{ u \in H^1(M; E) \, \vert \ p_D u = 0 \}.
\end{equation}
Up until this point, we had $E_D \define E\vert_{\pa_D M}$, $E_R  \define 
E\vert_{\pa_R M}$, and $E_N \define E\vert_{\pa_N M}$. The more general
framework introduced here is needed in order to treat the
Hodge-Laplacian.

\subsubsection{Assumptions under the splitting of $E$}
\label{sssec.assumptions}
In general, here is how the assumptions change in the new setting:

\begin{enumerate}[(i)]
\item The Poincar\'e inequality becomes the \emph{assumption} that the
  modified norm
\begin{equation*}
  |||u|||^2 \define \|\nabla u\|^2_{M} + \|p_R u\|^2_{\pa M}
\end{equation*}
is equivalent to the $H^1$-norm on $V$.

\item We continue to assume that $P$ has coefficients in $W^{\ell,
  \infty}$.

\item The differential operator $b$ is then assumed to satisfy $\Re b
  \geq \epsilon p_R$ for some $\epsilon > 0$.
  
\item Also, we continue to assume that $\Re(Q + Q_1^*) \ge -\delta$,
  for some $\delta$ small enough, with $\delta$ depending on $a$,
  $\epsilon$, and $(M, g)$.
\end{enumerate}

Then Theorem \ref{thm.main} remains valid in this setting. This is
equivalent to Corollary \ref{cor.well.posedness.bvp_2} formulated in
detail below. Before discussing this theorem, let us notice that
condition (i) replacing the Poincar\'e inequality is somewhat tricky,
as seen in the following example.

\begin{example}
Let $M = [0, 1]$ with the standard, euclidean metric. Then both
  $(M, \{0\})$ and $(M, \{1\})$ are of finite width, so they satisfy
  the Poincar\'e inequality (for scalar functions, that is, for $E =
  \CC$). Let now $E = \CC^2$. It is enough to take $E_R = \{0\}$, but $V$
  as in \eqref{eq.dev.V}. We thus need to specify $E_D$ above $\pa M = \{0,
  1\}$. Two seemingly similar choices will give completely different
  results.

  Indeed, let $E_D = \{0\} \oplus \CC$ above $\{0\}$. Then Assumption (i)
  on the equivalence of norms is satisfied if $E_D = \CC \oplus \{0\}$
  above $\{1\}$, but is not satisfied if $E_D = \{0\} \oplus \CC$ above
  $\{1\}$. The first case corresponds to putting together $(M, \{0\})$
  and $(M, \{1\})$, whereas the second case corresponds to putting
  together $(M, \{0, 1\})$ and $(M, \emptyset)$. In the second case,
  the Poincar\'e inequality is clearly not satisfied (since $u = 1$ is
  allowed).
\end{example}

\subsubsection{Boundary value problems}
Recall the discussion on boundary value problems in Section~\ref{sssec.bvp}.
As usual, Theorem \ref{thm.main} gives results on boundary value
problems.  We formulate, nevertheless, the result in the more general
framework relying on a decomposition of $E$ as in Equation~\eqref{splitting.E}.

\begin{corollary}\label{cor.well.posedness.bvp_2}
We consider the setting of Section~\ref{sssec.assumptions}. Then the boundary
  value problem
\begin{align}\label{eq.bvp}
\left\{ \begin{array}{rlll} P u &= \ f\, \in H^{\ell-1}(M; E) &&
  \text{ in } M\\
   p_D u &= \ h_0 \in H^{\ell+1/2}(\pa M; E_D) &&\text{ on } \pa M \\
% %
   (1- p_D) (\partial_\nu^P u + bu) &= \ h_1 \in H^{\ell-1/2}(\pa M;
   E_R \oplus E_N) && \text{ on } \pa M
\end{array}\right.
\end{align}
is well-posed (\ie  it has a unique solution $u \in H^{\ell+1}(M; E)$
that depends continuously on $h_0$ and $h_1$).
\end{corollary}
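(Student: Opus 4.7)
The plan is to reduce the corollary to the two ingredients already in hand: a weak well-posedness (the analogue of Theorem~\ref{thm.H1} in the split setting) and the elliptic regularity result (Theorem~\ref{thm.regularity}). The only genuinely new point is that the Dirichlet data $h_0$ is not zero and lives in the sub-bundle $E_D$; this is handled by the trace theorem of \cite{GrosseSchneider2013} on manifolds with bounded geometry, which gives a bounded right inverse of the trace map on $E_D$ and hence a lift $\tilde h_0 \in H^{\ell+1}(M;E)$ with $p_D\tilde h_0|_{\pa M}=h_0$. Substituting $u = v + \tilde h_0$ turns \eqref{eq.bvp} into an analogous boundary value problem in the unknown $v$ with $h_0 = 0$ (\ie $v \in V$) and with new data $f' = f - P\tilde h_0 \in H^{\ell-1}(M;E)$, $h_1' = h_1 - (1-p_D)(\pa_\nu^P \tilde h_0 + b\tilde h_0) \in H^{\ell-1/2}(\pa M; E_R\oplus E_N)$, both depending continuously on $(h_0,f,h_1)$.

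Next, I would establish existence and uniqueness of $v \in V$ at the energy level $\ell = 0$. Using the modified norm $|||\cdot|||$ of \ref{sssec.assumptions}(i), which is assumed equivalent to the $H^1$-norm on $V$, the computation of Proposition~\ref{prop.coercive} applies verbatim with $\pa_R M$ replaced by the spectral projection $p_R$: the strong Legendre condition yields $\Re\,a(\nabla v,\nabla v)\ge\gamma_a|\nabla v|^2$, while $\Re\,b\ge\epsilon p_R$ gives the boundary term $\epsilon\|p_R v\|^2_{\pa M}$, so
\begin{equation*}
\Re\,\langle \tilde P_{(a,b)}v,v\rangle \ \ge\ \min\{\gamma_a,\epsilon\}\, |||v|||^2 \ \ge\ C\,\|v\|_{H^1}^2 .
\end{equation*}
Absorbing $\tilde Q + \tilde Q_1^*$ using assumption \ref{sssec.assumptions}(iv) with $\delta$ chosen strictly smaller than this coercivity constant yields strong coercivity of $\tilde P \colon V \to V^*$. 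The Lax--Milgram Lemma~\ref{lemma.LaxMilgram} then provides a unique $v\in V$ with $\tilde P v = j_\ell(f',h_1')$ in $V^*$, depending continuously on the data.

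Finally, I would upgrade the regularity of $v$ from $H^1$ to $H^{\ell+1}$. By the equivalent reformulation of $\tilde P v = j_\ell(f',h_1')$ noted just before Theorem~\ref{thm.regularity}, this identity means exactly that $Pv = f'$ in $M$ and $(1-p_D)(\pa_\nu^P v + bv) = h_1'$ on $\pa M \setminus$ (the set where $p_D$ acts as identity). Since $P$ has coefficients in $W^{\ell,\infty}$ and $f' \in H^{\ell-1}$, $h_1' \in H^{\ell-1/2}$, and $p_D v = 0$ is smooth boundary data in $E_D$, the a~priori estimate of Theorem~\ref{thm.regularity} (applied in its version for the splitting of $E$, which holds by the same proof given in \cite{GN17}) bootstraps $v$ to $H^{\ell+1}(M;E)$ with a norm controlled by $\|f'\|_{H^{\ell-1}}+\|h_1'\|_{H^{\ell-1/2}}+\|v\|_{H^1}$. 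Setting $u = v + \tilde h_0$ then gives the desired solution with the continuous dependence claimed. The main obstacle I anticipate is verifying that Theorem~\ref{thm.regularity} indeed goes through with the boundary decomposition replaced by the bundle splitting $E|_{\pa M}=E_D\oplus E_R\oplus E_N$; this is a local matter at the boundary and is treated in \cite{GN17}, so the argument is essentially mechanical.
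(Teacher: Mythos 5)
Your proposal is correct and follows exactly the route the paper intends: reduce to homogeneous Dirichlet data via the trace theorem of \cite{GrosseSchneider2013}, obtain the weak solution in $V$ by the coercivity argument of Proposition~\ref{prop.coercive} (with $\pa_R M$ replaced by $p_R$ and the assumed norm equivalence standing in for the Poincar\'e inequality) together with the Lax--Milgram Lemma~\ref{lemma.LaxMilgram}, and then bootstrap with the split-bundle version of Theorem~\ref{thm.regularity} from \cite{GN17}. The paper leaves this proof implicit, merely asserting that Theorem~\ref{thm.main} remains valid under the splitting of $E$, so your write-up supplies the same argument in more detail; the one point you rightly flag as needing verification (that the regularity estimate survives the passage from a boundary decomposition to a bundle splitting) is indeed the only non-mechanical step, and it is handled locally in \cite{GN17}.
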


\subsubsection{Self-adjointness} As in \cite{AGN1}, we obtain the
following corollary.

\begin{corollary}\label{cor.sa} Let us assume that $P$ is as in 
Section~\ref{sssec.assumptions} and, moreover, that it has coefficients in
$W^{1, \infty}$ and is formally self-adjoint, that is, that $(Pu, v)
  = (u, Pv)$ for $u, v \in \CIc(M\smallsetminus \pa M; E)$.  Then $P$
  with domain $$\maD(P) \define  \{ u \in H^2(M; E) \, \vert \ p_D u = 0,
  \ (1 - p_D) (\partial_\nu^P u + bu) = 0\}$$ is self-adjoint.
\end{corollary}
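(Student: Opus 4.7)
My plan is to argue that $P$ with the stated domain is symmetric and has range equal to $L^2(M;E)$, and then invoke the elementary criterion that a symmetric operator $T$ with $\operatorname{Ran}(T-\lambda)=H$ for some real $\lambda$ is self-adjoint. Surjectivity is immediate from the preceding well-posedness machinery: applying Corollary~\ref{cor.well.posedness.bvp_2} with $\ell=1$, $h_0=0$, $h_1=0$, for every $f\in L^2(M;E)$ there is a unique $u\in H^2(M;E)$ satisfying $p_D u=0$, $(1-p_D)(\pa_\nu^P u+bu)=0$, and $Pu=f$. In other words, $P\colon \maD(P)\to L^2(M;E)$ is a bijection (with bounded inverse, hence closed).

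The symmetry is the substantive step. For $u,v\in\maD(P)$, both lie in $V=\{u\in H^1(M;E)\mid p_D u=0\}$, and by Definition~\ref{def.divergence} and the construction of $\tilde P$, we have the Green-type identity
\begin{equation*}
  \<\tilde P u,v\>  \seq  B(u,v)  \seq  (Pu,v)_M + \bigl((1-p_D)(\pa_\nu^P u+bu),\,v\bigr)_{\pa M\smallsetminus\pa_D M},
\end{equation*}
where the last term is written using $p_D v=0$ to replace $v$ by $(1-p_D)v$. By the Robin-type condition in $\maD(P)$, this boundary contribution vanishes, so $\<\tilde P u,v\>=(Pu,v)_M$, and symmetrically $\<\tilde P v,u\>=(Pv,u)_M$. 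The formal self-adjointness hypothesis $(Pu,v)=(u,Pv)$ for $u,v\in \CIc(M\smallsetminus\pa M;E)$ forces the sesquilinear form $a$ to be Hermitian and $Q_1=Q$ (modulo adjusting zeroth-order terms absorbed into $b$), which in turn makes the bilinear form $B$ Hermitian, i.e.\ $B(u,v)=\overline{B(v,u)}$ for all $u,v\in V$ (the only boundary piece that survives is the $b$-term, which is Hermitian because $\Re\, b$ is a zeroth-order operator and the non-self-adjoint part of $b$ must drop out to be consistent with interior self-adjointness). Therefore $(Pu,v)=\overline{(Pv,u)}=(u,Pv)$.

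With symmetry in hand, the conclusion is routine: let $u\in\maD(P^*)$ with $P^*u=w\in L^2(M;E)$. By surjectivity there is $v\in\maD(P)\subset\maD(P^*)$ with $Pv=w$, and then $P^*(u-v)=0$, i.e.\ $\<u-v,P\varphi\>=0$ for all $\varphi\in\maD(P)$. Surjectivity of $P$ then forces $u-v\perp L^2(M;E)$, so $u=v\in\maD(P)$ and $Pu=P^*u$, giving $P=P^*$.

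The main obstacle will be verifying that formal self-adjointness on interior test sections really forces $B(u,v)=\overline{B(v,u)}$ on all of $V$, since in principle the boundary operator $b$ need not be self-adjoint. This has to be extracted from the fact that any anti-Hermitian part of $b$ would produce a boundary term in the interior Green identity that has no counterpart on $\CIc(M\smallsetminus\pa M;E)$, and from the standing assumption $\Re\, b\ge 0$ being consistent only with $b=b^*$ in the self-adjoint setting; once this is pinned down, every remaining step is bookkeeping.
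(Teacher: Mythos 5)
Your overall architecture (symmetry on $\maD(P)$ plus surjectivity from Corollary~\ref{cor.well.posedness.bvp_2} with $\ell=1$, $h_0=h_1=0$, then the standard criterion) is exactly the intended route --- the paper itself only says ``as in [AGN1]'' --- and the surjectivity step and the final deduction of $\maD(P^*)\subset\maD(P)$ are fine. The gap is in the symmetry step, and it is precisely the point you flag but then resolve incorrectly. Interior formal self-adjointness gives \emph{no} information about $b$: the $b$-term in $B$ is a boundary integral, which vanishes identically on $\CIc(M\smallsetminus\pa M;E)$, so no anti-Hermitian part of $b$ can ``produce a boundary term in the interior Green identity'' --- there is simply no trace of $b$ there to contradict. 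Likewise $\Re\, b\ge 0$ does not force $b=b^*$ (take $b=I+iS$ with $S$ a self-adjoint first-order operator on $E|_{\pa M}$: then $\Re\,b=I\ge 0$ but $b\neq b^*$). So the claim that $B$ is Hermitian on $V\times V$ does not follow from the stated hypotheses.

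There is a second, independent leak in the same step: the identity $B(u,v)=\overline{B(v,u)}$ that you extract from $(Pu,v)=(u,Pv)$ on interior test sections extends by density only to $H^1_0\times H^1_0$, not to $V\times V$. Interior self-adjointness forces $R:=Q-Q_1$ to be \emph{formally} self-adjoint, not $Q_1=Q$; and a formally self-adjoint first-order $R$ still contributes a surviving boundary term $\int_{\pa M}(\sigma_1(R)(\nu)u,v)\,\dvol_{\pa g}$ when $u,v\in\maD(P)$ do not vanish on $\pa M\smallsetminus\pa_D M$. A concrete failure: $M=[0,1]$, $E=\CC$, $a=1$, $b=0$, $Q=i\,d/dx$, $Q_1=0$, so $P=-d^2/dx^2+i\,d/dx$ is formally self-adjoint on $\CIc((0,1))$, yet on the Neumann domain $(Pu,v)-(u,Pv)=i\bigl(u(1)\overline{v(1)}-u(0)\overline{v(0)}\bigr)\neq 0$. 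To close the argument you must add (or make explicit, as is implicit in the paper via the reference to the Laplacian case of \cite{AGN1}) the hypothesis that the full Dirichlet form is Hermitian on $V\times V$ --- equivalently $a=a^*$, $Q_1=Q$, and $(1-p_D)(b-b^*)(1-p_D)=0$ --- after which your computation $\<\tilde Pu,v\>=(Pu,v)$ for $u\in\maD(P)$, $v\in V$ and the concluding surjectivity argument go through verbatim.
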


See also \cite{DancerDaners, Daners2000, gesztesyMitrea2009,
  gesztesyMitrea2014a}, where bounded domains, but with Lipschitz or
more general boundaries, were considered. As in those papers, one
obtains also consequences for the corresponding parabolic equations.

\subsubsection{Coercivity in general and G\aa rding's inequality}
\label{sssec.coercive}
As is well known, results such as Corollary \ref{cor.sa} are closely
related to G\aa rding's inequality. This inequality is usually
obtained for uniformly strongly elliptic operators. Indeed, following
\cite{AgmonCoercive}, we can extend our results to uniformly strongly
elliptic operators as follows.

Recall that an operator $P$ is \emph{coercive} on $V \subset H^1(M;
E)$ if it satisfies the G{\aa}rding inequality, that is, if there
exist $\gamma > 0$ and $R \in \RR$ such that for all $u\in V$
\begin{equation}\label{eq.Gaarding}
  \Re (Pu, u) \geq \gamma \|u\|_{H^1(M;E)}^2 - R \|u\|^2_{L^2(M; E)}.
\end{equation}
Then $P + \lambda$ is strongly coercive for $\Re(\lambda) > R$, and
hence Theorems \ref{thm.main} and \ref{thm.H1} remain true for $P$
replaced with $P + \lambda$. Coercive operators on \emph{bounded}
domains were characterized by Agmon in \cite{AgmonCoercive} as
strongly elliptic operators satisfying suitable conditions at the
boundary (which we shall call the ``Agmon condition.''). We shall need
a \emph{uniform} version of this condition, to account for the
non-compactness of the boundary.

Let $P^{(0)}_x$ be the principal part of the operator $P$ and
$C_x^{(0)}$ be the principal part of the boundary conditions
$(p_D, (1-p_D) (\partial_\nu^P + b))$ with coefficients frozen at some
$x \in \pa M$, as in \cite{GN17}. Let $B_x^{(0)}$ be the associated
Dirichlet bilinear form to $P^{(0)}_x$ equipped with the above
  boundary conditions (again with coefficients frozen at $x$). This
is as in Equation \eqref{eq.def.B}. In particular, we have the
projection $p_{D,x}^{(0)} \colon E_x \to (E_D)_x$ that enters in the
boundary conditions defined by $C_x^{(0)}$. This defines a boundary
value problem on the half-space $T_x^+M$ and a bilinear form on
$T_x^+M$ that is continuous in the $H^1$-seminorm $\vert
  u\vert_{H^1}\define \Vert \nabla u\Vert_{L^2}$.

\begin{definition}
We say that $P$ (or the form $B$ of Equation \eqref{eq.B.tildeP})
satisfies the \emph{uniform Agmon condition (on $\pa M$)} if it is
uniformly strongly elliptic and if there exists $C > 0$ such
\begin{equation*}
   B_x^{(0)} (u, u)  \define  (P^{(0)}_x u, u) + \int_{T_x \pa M}
    (b^{(0)} u, u) dx' \geq C |u|_{H^1}^2,
\end{equation*}
for all $x \in \pa M$ and all $u \in \CIc(T_x^+M)$ that satisfies
$p_{D,x}^{(0)} u = 0$ (on $T_x \pa M = \pa T_x^+M$).
\end{definition}

We have then the following result that is proved, \emph{mutatis
  mutandis}, as the regularity result in \cite{GN17}, to which we
refer for more details.

\begin{theorem}
We use the notation in \ref{sssec.splitting}, in particular,
$$V \define \{ u \in H^1(M; E) \, | \ p_D u = 0 \}.$$ We have that $\tilde
P$ (equivalently, the form $B$ of Equation \eqref{eq.B.tildeP}) is
coercive on $V$ if, and only if, it satisfies the uniform Agmon
condition on $\pa M$.
\end{theorem}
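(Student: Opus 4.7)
My plan is to prove both directions of the equivalence by reducing to model problems on the tangent half-space $T_x^+M$ via the bounded-geometry boundary charts $\kappa_x$ of \eqref{eq.FC-chart}, then patching the resulting local estimates with a uniform partition of unity. This parallels the proof of the regularity theorem in \cite{GN17}, which the statement itself signals.

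For the easier direction, coercivity $\Rightarrow$ uniform Agmon, I use a rescaling/blow-up argument. Fix $x_0 \in \pa M$ and $u \in \CIc(T_{x_0}^+M)$ with $p_{D,x_0}^{(0)}u = 0$. Pulling $u$ back through $\kappa_{x_0}$ and rescaling by a small parameter $\epsilon$, I obtain a family $u_\epsilon \in V$ supported in an $\epsilon$-neighborhood of $x_0$. Bounded geometry gives uniform continuity of the coefficients of $P$ and $b$, so after the natural rescaling of $B$ one obtains
\[
 B(u_\epsilon, u_\epsilon) = B_{x_0}^{(0)}(u,u) + o(1)\,|u|_{H^1}^2, \qquad \Vert u_\epsilon\Vert_{L^2}^2 = O(\epsilon^2)\, |u_\epsilon|_{H^1}^2 .
\]
Plugging this family into the assumed G\aa rding inequality \eqref{eq.Gaarding} and letting $\epsilon \to 0$ kills the $L^2$ remainder and yields $B_{x_0}^{(0)}(u,u) \geq \gamma |u|_{H^1}^2$, i.e.\ the uniform Agmon condition with constant $C = \gamma$ independent of $x_0$. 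Applying the same scaling at interior points of $M$ forces uniform strong ellipticity of $P^{(0)}$, which is the remaining piece of the uniform Agmon condition.

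For the converse direction, uniform Agmon $\Rightarrow$ coercivity, I take a uniform $r$-covering $\{p_\gamma\}_{\gamma\in I}$ as in Definition~\ref{def.FC} together with a subordinate partition of unity $\{\chi_\gamma^2\}$ whose $W^{1,\infty}$-norms are uniformly bounded in $\gamma$ (available thanks to bounded geometry). On each chart $W_\gamma$, I compare the variable-coefficient form $B(\chi_\gamma u, \chi_\gamma u)$ with the form frozen at $p_\gamma$: uniform continuity of the coefficients bounds the difference by a multiple of $r$ times $\Vert\nabla(\chi_\gamma u)\Vert_{L^2}^2$. The frozen forms each satisfy a genuine G\aa rding inequality with constants \emph{uniform} in $\gamma$ — at boundary charts by the uniform Agmon condition, at interior charts by uniform strong ellipticity together with the classical Fourier/G\aa rding argument. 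Taking $r$ small enough, the perturbation is absorbed, and summing over $\gamma$ using $\sum_\gamma \chi_\gamma^2 = 1$ and uniform local finiteness of the cover produces the global G\aa rding inequality; the commutators $[\chi_\gamma, \nabla]$ and the first-order pieces $Q, Q_1^*$ of $P$ are all absorbed, via Young's inequality, into the remainder $R\Vert u\Vert_{L^2}^2$.

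The main obstacle lies in the boundary step of the converse: one must upgrade the pure seminorm bound of the Agmon condition on the half-space to a full G\aa rding estimate for the true variable-coefficient form on each small boundary patch, with constants \emph{uniform} in the base point. This uniformity is precisely what bounded geometry (uniform modulus of continuity of coefficients, uniform injectivity radius, totally bounded curvature) delivers, and it is the same uniformity mechanism at the heart of the regularity theorem in \cite{GN17}. Once that machinery is granted, the rest of the argument is the classical Agmon characterization of coercivity carried through verbatim — the \emph{mutatis mutandis} invoked by the statement.
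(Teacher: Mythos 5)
Your proposal is correct and follows essentially the same route the paper sketches: the blow-up/dilation argument at boundary points for the implication coercive $\Rightarrow$ uniform Agmon, and the frozen-coefficient, uniform-partition-of-unity argument (with the interior handled by uniform strong ellipticity and perturbations absorbed into the $R\|u\|_{L^2}^2$ term) for the converse. The paper itself only gives this two-step outline and defers the details to \cite{GN17}, so your elaboration matches its intended proof.
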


The idea of the proof, in one direction, is to consider $u$ with a
shrinking supports towards $x$ using dilations and to retain the
dominant terms. In the other direction, one uses the standard
partitions of unity on manifolds with (boundary and) bounded geometry.
See \cite{GN17, Taylor1} for details of this method.

\begin{remark}
The reader may have noticed that our Robin boundary conditions are of
the form $\pa_\nu^P + b$. It makes sense, of course, to consider
boundary conditions of the form $a\pa_\nu^P + b$, where~$a$ is an
endomorphism of $E_R \oplus E_N$. If $a$ is invertible, this changes
nothing. However, significant differences arise if $a$ is
singular. See, for instance, the recent preprint \cite{nazarofPopoff}
and the references therein.
\end{remark}

See \cite{BaerBallmann, KarstenBMB, PlamenevskiBook, SchSch1,
  SchroheSeiler} for an approach to boundary value problems on
non-compact manifolds using pseudodifferential operators and for
related recent results.

%%%%
\def\cprime{$'$}

\end{document}